\def\RR{\mathbb{R}}
\def\CC{\mathbb{C}}
\def\NN{\mathbb{N}}
\newcommand{\al}{{\alpha}}
\newcommand{\la}{{\lambda}}
\newcommand{\f}{{\varphi}}
\newcommand{\C}{{\mathbb  C}}
\newcommand{\bA}{{{\bf A}}}
\newcommand{\fdot}{\,\cdot\,}
\newcommand{\cH}{\mathcal{H}}
\newcommand{\cC}{\mathcal{C}}
\newcommand{\cB}{\mathcal{B}}
\newcommand{\cF}{\mathcal{F}}
\newcommand{\cA}{\mathcal{A}}
\newcommand{\cD}{\mathcal{D}}
\newcommand{\ft}{\mathfrak{t}}
\newcommand{\cV}{\mathcal{V}}
\DeclareMathOperator{\dom}{dom}
\DeclareMathOperator{\Ran}{Ran}
\newcommand{\ci}[1]{_{ {}_{\scriptstyle #1}}}
\newcommand{\ti}[1]{_{\scriptstyle \text{\rm #1}}}
\begin{document}

\title*{Perspectives on General Left-Definite Theory\thanks{The authors would like to congratulate Lance Littlejohn on the occasion of his 70th birthday, and thank him for all of the help over the years. Lance provides the inspiration and encouragement to dream big.}}
\titlerunning{Perspectives on Left-Definite Theory} 
\author{Dale Frymark and Constanze Liaw}
\institute{Dale Frymark \at Nuclear Physics Institute, Department of Theoretical Physics, Czech Academy of Sciences, 25068 Řež, Czech Republic. \newline
\email{frymark@ujf.cas.cz}
\and Constanze Liaw \at University of Delaware, Department of Mathematical Sciences, 501 Ewing Hall, Newark, DE 19716, USA, and \newline
CASPER, Baylor University, One Bear Place \#97328,      
 Waco, TX 76798, USA. \newline
\email{liaw@udel.edu}}
%
%
\maketitle

\abstract*{In 2002, Littlejohn and Wellman developed a celebrated general left-definite theory for semi-bounded self-adjoint operators with many applications to differential operators. The theory starts with a semi-bounded self-adjoint operator and constructs a continuum of related Hilbert spaces and self-adjoint operators that are intimately related with powers of the initial operator. The development spurred a flurry of activity in the field that is still ongoing today. \newline\indent
The main goal of this expository (with the exception of Proposition \ref{p-new}) manuscript is to compare and contrast the complementary theories of general left-definite theory, the Birman--Krein--Vishik (BKV) theory of self-adjoint extensions and singular perturbation theory. In this way, we hope to encourage interest in left-definite theory as well as point out directions of potential growth where the fields are interconnected. We include several related open questions to further these goals.}

\abstract{In 2002, Littlejohn and Wellman developed a celebrated general left-definite theory for semi-bounded self-adjoint operators with many applications to differential operators. The theory starts with a semi-bounded self-adjoint operator and constructs a continuum of related Hilbert spaces and self-adjoint operators that are intimately related with powers of the initial operator. The development spurred a flurry of activity in the field that is still ongoing today. \newline\indent
The main goal of this expository (with the exception of Proposition \ref{p-new}) manuscript is to compare and contrast the complementary theories of general left-definite theory, the Birman--Krein--Vishik (BKV) theory of self-adjoint extensions and singular perturbation theory. In this way, we hope to encourage interest in left-definite theory as well as point out directions of potential growth where the fields are interconnected. We include several related open questions to further these goals.}

\begin{acknowledgement}
Since August 2020, Liaw, C. has been serving as a Program Director in the Division of Mathematical Sciences at the National Science Foundation (NSF), USA, and as a component of this position, she received support from NSF for research, which included work on this paper. Any opinions, findings, and conclusions or recommendations expressed in this material are those of the authors and do not necessarily reflect the views of the NSF.
\end{acknowledgement}

\section{Introduction}\label{s-INTRO}

The development of general left-definite theory by Littlejohn and Wellman had vast repercussions for differential operators. It extended the important concept of a left-definite Hilbert space to a continuum of associated Hilbert spaces\footnote{All Hilbert spaces considered are assumed to be separable.}. We appeal to differential operators to further explain the motivation for this extension from the classical left-definite space by following \cite{LW02} below. Note that some knowledge of self-adjoint extension theory is assumed throughout the manuscript, the reader is encouraged to consult the Appendix for some basic definitions and notions. The references \cite{AG, N} can be used for more details. For some locally integrable positive weight function $w$, let
\begin{align}\label{e-basic}
    \ell[y](x)=\la w(x)y(x),
\end{align}
where $\ell$ is Lagrangian symmetric differential expression of order $2n$ given by
\begin{align}\label{e-lagrangian}
\ell^n[f](x)=\sum_{j=1}^n(-1)^j(a_j(x)f^{(j)}(x))^{(j)}, \text{ } x\in(a,b),
\end{align}
where $-\infty\leq a<b\leq\infty$. For simplicity, we assume each coefficient $a_j(x)$ is smooth and positive on $(a,b)$. Due to the presence of $w(x)$ on the right hand side of equation \eqref{e-basic}, $\cH=L^2[(a,b),w(x)]$ is referred to as the {\em right-definite} spectral setting for $w^{-1}\ell$. Two well-known formulas play a role in defining the left-definite space.

For $f,g\in\cD\ti{max}$, the maximal domain of $\ell$, Green's formula says that
\begin{align}\label{e-greens}
    \int_a^b\ell[f](x)\overline{g}(x)dx=\int_a^bf(x)\overline{\ell[g]}(x)dx+[f,g](x)\big|_a^b,
\end{align}
where $[\fdot,\fdot]$ is the sesquilinear form for $\ell$. Dirichlet's formula is
\begin{equation}\label{e-dirichlet}
\begin{aligned}
\int_a^b\ell[f](x)\overline{g(x)}dx=\sum_{j=0}^n\int_a^b a_j(x)&f^{(j)}(x)\overline{g}^{(j)}(x)dx \\
&+\llbracket f,g\rrbracket(x)\big|_a^b,
\end{aligned}
\end{equation}
where $\llbracket\fdot,\fdot\rrbracket$ is a bilinear form closely related to the sesquilinear form $[\fdot,\fdot]$. Let ${\bf A}$ be a self-adjoint operator acting via $\ell$ on a domain $\cD({\bf A})$ such that for all $f,g\in\cD({\bf A})$ the bilinear form $\llbracket f,g\rrbracket(x)\big|_a^b=0$. In this case, equation \eqref{e-dirichlet} simplifies to 
\begin{align}\label{e-setup}
    \langle {\bf A}f,g\rangle_{\cH}=\int_a^b\ell[f](x)\overline{g(x)}dx=\sum_{j=0}^n\int_a^b a_j(x)&f^{(j)}(x)\overline{g}^{(j)}(x)dx.
\end{align}
Such an operator ${\bf A}$ is then semi-bounded (see eq.~\eqref{e-semibdd}) thanks to the positivity of the coefficient functions $a_j(x)$. The expression $\ell$ thus generates an inner product defined by the right hand side of equation \eqref{e-setup}. Explicitly, for $f,g\in\cD({\bf A})$,
\begin{align}\label{e-first}
\langle {\bf A}f,g\rangle_{\cH}=\langle f,g\rangle_1.
\end{align}
The closure of $\cD({\bf A})$ in the topology generated by the norm $\|\fdot\|_1=\langle\fdot,\fdot\rangle_1^{1/2}$ is then denoted by $H_1$ and referred to as the left-definite setting for $w^{-1}\ell$, with $H_1$ the (first) left-definite space. This is due to the inner product $\langle\fdot,\fdot\rangle_1$ originating from the left hand side of equation \eqref{e-basic}. The terminology itself can be traced back to Weyl in 1910 \cite{W}. 

This first left-definite space presented above is identified by Littlejohn and Wellman as $\cD({\bf A}^{1/2})$, along with a continuum of other associated left-definite spaces in Definition \ref{t-ldinpro}. 
Their new characterization makes connections to other areas of self-adjoint operator theory more clear. The core definitions and results of left-definite theory (we omit the term `general' from now on) are included in Section \ref{s-LD}. A key aspect of the theory is that it includes stability of spectral type, see Theorem \ref{t-leftdefortho}, which can be viewed as an important advantage over the other theories discussed. Since left-definite theory was described in 2002, it has been the focus of many works. Here, we focus on techniques and results recently developed by the authors in order to present several related conjectures and open questions. It is the our hope that researchers may take inspiration from these problems to make contributions to the field.

The Birman--Krein--Vishik (BKV) theory of semi-bounded forms, which puts closed semi-bounded forms into a one-to-one correspondence with self-adjoint operators, agrees with general left-definite theory. In many ways, this theory could also be thought of as a generalization of equation \eqref{e-setup}: identifying $\langle {\bf A}f,g\rangle_{\cH}$ as an inner product in another Hilbert space for general self-adjoint operators ${\bf A}$. BKV theory and its connections to left-definite theory are the subject of Section \ref{s-BKV}. Proposition \ref{p-new} proves that a continuum of semi-bounded closed forms are actually associated with each semi-bounded self-adjoint operator via left-definite theory. Such freedom in choosing a form to work with may provide valuable flexibility in applications. 

There is also a scale of Hilbert spaces that is used in singular self-adjoint perturbation theory, when the perturbation does not lie in the Hilbert space but can be shown to belong to an associated Hilbert space. This scale of Hilbert spaces is generated from a self-adjoint operator, and if this operator is semi-bounded they are equivalent to the continuum of left-definite spaces of the operators. However, the scale of Hilbert spaces is focused on linear bounded functionals which act on these left-definite spaces, thereby corresponding to left-definite spaces with negative indices that are defined through duality. Although independently defined, there are instances in which the definitions of the spaces occurring in left-definite theory are identical to those in the scale of Hilbert spaces. We find this intriguing, especially due to the fact that they are used for different, complementary purposes. The connection between this scale of spaces and left-definite theory is described in Section \ref{s-SCALE}.

One of the main obstacles in both BKV theory and singular perturbation theory is that the associated spaces are difficult to describe with explicit boundary conditions. This is another area where connections to left-definite theory can be of benefit, as there are now clear descriptions even in some hard to handle examples, i.e.~Sturm--Liouville operators with limit-circle endpoints.

Self-adjoint extensions of symmetric Sturm--Liouville operators with limit-circle endpoints were recently characterized in terms of a singular perturbation by the authors in \cite{BFL}. The perturbation heavily exploited the connection between the scale of Hilbert spaces and BKV theory, along with boundary triples and boundary pairs (see e.g.~\cite{BdS}). In particular, we mention that boundary triples are partially defined by the Green's formula in equation \eqref{e-greens} and boundary pairs essentially operate on two key spaces from the scale of Hilbert spaces.

The basics of this perturbation setup are described in Section \ref{s-PERT} in order to formulate a conjecture about how left-definite theory may assist with extending the construction to powers of such operators. 

While the applications in this manuscript are focused on ordinary differential operators, it should be pointed out that left-definite theory does apply to self-adjoint elliptic partial differential operators too. However, to the best knowledge of the authors this fact has not been thoroughly explored.

Overall, the three theories: left-definite, BKV and singular perturbation, are found to be complementary to each other. We hope this manuscript helps readers find new and interesting connections between fields they may or may not have encountered before and sparks their interest in building upon these connections.

\subsection{Notation}

The notation $L\ti{max}=\{\ell,\cD\ti{max}\}$ is used to say that the operator $L\ti{max}$ acts via $\ell$ on the domain $\cD\ti{max}$. Self-adjoint operators are written in bold face, e.g.~${\bf A}$, for emphasis while all other operators are typeset normally.

Due to the three different theories in the manuscript, we clarify the notation for each here to help avoid any confusion. In left-definite theory, the inner product is denoted with a single subscript $r$, i.e.~$\langle\fdot,\fdot\rangle_r$, and the Hilbert space associated with this inner product is written as $\cH_r$ (see Definition \ref{t-ldinpro}). In the BKV theory of semi-bounded forms, the inner product and associated Hilbert space are both denoted with the form as a subscript, i.e.~$\langle\fdot,\fdot\rangle\ci{\ft_{{\bf S}-\gamma}}$ and $\cH\ci{\ft_{{\bf S}-\gamma}}$ (see equation \eqref{e-forminnerprod}). Finally, when discussing the scale of Hilbert spaces associated with a self-adjoint operator ${\bf A}$, we use double subscript $\langle\fdot,\fdot\rangle_{s,-s}$ to denote the duality pairing and, e.g.~$\cH_{-s}({\bf A})$ to denote the desired space (see Definition \ref{d-standardscale}).

\section{Sturm--Liouville Operators}\label{s-BACK}

Many of the theories and conjectures in this manuscript, especially those in Section \ref{s-LD} and \ref{s-PERT}, are illustrated by applications to Sturm--Liouville operators because they are so well studied, see e.g.~\cite{AG, BEZ, E, Z}. We briefly introduce the central concepts of these operators here.

Consider the classical Sturm--Liouville differential equation
\begin{align*}
\dfrac{d}{dx}\left[p(x)\dfrac{df}{dx}(x)\right]+q(x)f(x)=-\lambda w(x)f(x),
\end{align*}
where $p(x),w(x)>0$ a.e.~on $(a,b)$ and $q(x)$ real-valued a.e.~on $(a,b)$, with $a<b$ and $a,b\in\RR\cup\{\pm \infty\}$.
Furthermore, let $1/p(x),q(x),w(x)\in L^1\ti{loc}(a,b)$. 
The differential expression can be viewed as a linear operator, mapping a function $f$ to the function
\begin{align}\label{d-sturmop}
\ell[f](x):=-\dfrac{1}{w(x)}\left(\dfrac{d}{dx}\left[p(x)\dfrac{df}{dx}(x)\right]+q(x)f(x)\right).
\end{align}
This unbounded operator acts on the Hilbert space $L^2[(a,b),w]$, endowed with the inner product 
$
\langle f,g\rangle:=\int_a^b f(x)\overline{g(x)}w(x)dx.
$
In this setting, the eigenvalue problem $\ell[f](x)=\lambda f(x)$ can be considered. However, the operator acting via $\ell[\fdot]$ on $L^2[(a,b),w]$ is not self-adjoint a priori; additional boundary conditions may be required to ensure this property. The definition here is slightly different than that of the differential operator in equation \eqref{e-lagrangian}, where to explain the different left and right-definite settings the weight function was not included. Hence, the operator of interest ended up arising from $w^{-1}\ell$, which agrees with the expression here.

Endpoints are either in the limit-point or limit-circle case depending on whether one or two solutions are in $L^2[(a,b),w]$, respectively. When in the limit-circle case, we assume that endpoints are non-oscillatory, see e.g.~\cite{BEZ, Z} for more.

Furthermore, the operator $\ell^n[\fdot]$ is defined as the order two operator $\ell[\fdot]$ composed with itself $n$ times, creating a differential operator of order $2n$. Every formally symmetric differential expression $\ell^n[\fdot]$ of order $2n$ with coefficients $a_k:(a,b)\to\RR$ and $a_k\in C^k(a,b)$, for $k=0,1,\dots,n$ and $n\in\NN$, has the Lagrangian symmetric form given in equation \eqref{e-lagrangian}.
Further details can be found in \cite{DS, ELT}.

The classical differential expressions of Jacobi, Laguerre and Hermite are all semi-bounded and admit such a representation. Semi-boundedness is defined as the existence of a constant $k\in\RR$ such that for all $x$ in the domain of the operator $A$ the following inequality holds:
\begin{align}\label{e-semibdd}
    \langle Ax,x\rangle\geq k\langle x,x\rangle.
\end{align}
This additional property, combined with self-adjointness, allows for a continuum of nested Hilbert spaces to be defined within $L^2[(a,b),w]$ via non-negative real powers of the expression $\ell$. Indeed, this continuum will provide a Hilbert scale, and many facts about the spectrum and the operators (see e.g.~\cite{DHS, LW13}) can be deduced using this point of view. More details about Hilbert scales can be found in \cite{AK, KP}. This particular Hilbert scale with self-adjoint operators that are semi-bounded is the topic of left-definite theory \cite{LW02}.

\section{Left-definite theory}\label{s-LD}

General left-definite theory uses powers of a semi-bounded self-adjoint differential operator to create a continuum of operators whereupon spectral properties can be studied. This spectral information is invariant in the sense that knowledge about the spectrum of one of the operators in this continuum allows us to obtain insight into all the other operators. We begin by reviewing some of the main definitions and results from the landmark paper by Littlejohn and Wellman \cite{LW02} in order to introduce the theory.

Let $\cV$ be a vector space over $\CC$ with inner product $\langle\fdot,\fdot\rangle$ and norm $\|\fdot\|$. The resulting inner product space is denoted $(\cV,\langle\fdot,\fdot\rangle)$.

\begin{definition}[{\cite[Theorem 3.1]{LW02}}]\label{t-ldinpro}
Suppose ${\bf A}$ on the Hilbert space $\cH=(\cV,\langle\fdot,\fdot\rangle )$ is a self-adjoint operator that is bounded below by $kI$, where $k>0$. Let $0<r\in\RR$. Define $\cH_r=(\cV_r, \langle\fdot,\fdot\rangle_r)$ with
$$\cV_r=\cD ({\bf A}^{r/2})$$
and
$$\langle x,y\rangle_r=\langle {\bf A}^{r/2}x,{\bf A}^{r/2}y\rangle \quad\text{for} \quad x,y\in \cV_r.$$
Then $\cH_r$ is said to be the $r$-th {\em left-definite space} associated with the pair $(\cH,{\bf A})$.
\end{definition}

It was proved in \cite[Theorem 3.1]{LW02} that $\cH_r=(\cV_r, \langle\fdot,\fdot\rangle_r)$ is also described as the left-definite space associated with the pair $(\cH, {\bf A}^r)$.
Specifically, we have:
\begin{enumerate}
\item $\cH_r$ is a Hilbert space,
\item $\cD ({\bf A}^r)$ is a subspace of $\cV_r$,
\item $\cD ({\bf A}^r)$ is dense in $\cH_r$,
\item $\langle x,x\rangle_r\geq k^r\langle x,x\rangle$ for $x\in \cV_r$, and
\item $\langle x,y\rangle_r=\langle {\bf A}^rx,y\rangle$  for $x\in\cD ({\bf A}^r)$, $y\in \cV_r$.
\end{enumerate}

The left-definite domains are defined as the domains of compositions of the self-adjoint operator ${\bf A}$, but the operator acting on this domain is slightly more difficult to define. 

\begin{definition}[{\cite[Definition 2.2/2.3]{LW02}}]
Let $\cH=(\cV,\langle\fdot,\fdot\rangle)$ be a Hilbert space. Suppose ${\bf A}:\cD ({\bf A})\subset \cH\to \cH$ is a self-adjoint operator that is bounded below by $k>0$. Let $1\leq r\in\RR$. If there exists a self-adjoint operator ${\bf A}_r:\cH_r\to \cH_r$ that is a restriction of ${\bf A}$ from the domain $\cD({\bf A})$ to $\cD({\bf A}_r)$,
we call such an operator an $r$-th {\em left-definite operator associated with $(\cH,{\bf A})$}. For $0<r<1$ we obtain an $r$-th {\em left-definite operator associated with $(\cH,{\bf A})$} analogously, but by taking the closure of the domain $\cD({\bf A})$ with respect to the norm induced by the inner product $\langle\fdot, \fdot\rangle_r$.
\end{definition}

The connection between the $r$-th left-definite operator and the $r$-th composition of the self-adjoint operator ${\bf A}$ can be made explicit: 

\begin{corollary}[{\cite[Corollary 3.3]{LW02}}] \label{t-comppower}
Suppose ${\bf A}$ is a self-adjoint operator in the Hilbert space $\cH$ that is bounded below by $k>0$. For each $r>0$, let $\cH_r=(\cV_r, \langle\fdot,\fdot\rangle_r)$ and ${\bf A}_r$ denote, respectively, the $r$-th left-definite space and the $r$-th left-definite operator associated with $(\cH,{\bf A})$. Then
\begin{enumerate}
\item $\cD ({\bf A}^r)=\cV_{2r}$, in particular, $\cD ({\bf A}^{1/2})=\cV_1$ and $\cD ({\bf A})=\cV_2$;
\item $\cD ({\bf A}_r)=\cD ({\bf A}^{(r+2)/2})$, in particular, $\cD ({\bf A}_1)=\cD ({\bf A}^{3/2})$ and $\cD ({\bf A}_2)=\cD ({\bf A}^2)$.
\end{enumerate}
\end{corollary}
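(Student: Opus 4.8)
The plan is to route the entire argument through the Borel functional calculus for ${\bf A}$, which is available because ${\bf A}={\bf A}^*$ is bounded below by $kI$ with $k>0$. Thus ${\bf A}$ has spectrum contained in $[k,\infty)$, the real powers ${\bf A}^{s}$ ($s\in\RR$) are well-defined self-adjoint operators that mutually commute, they obey ${\bf A}^{s}{\bf A}^{t}\subseteq{\bf A}^{s+t}$, and every negative power ${\bf A}^{-s}$ with $s>0$ is bounded on $\cH$. Part (1) then needs no estimate at all: by Definition \ref{t-ldinpro} one has $\cV_{\rho}=\cD({\bf A}^{\rho/2})$ for all $\rho>0$, so substituting $\rho=2r$ gives $\cV_{2r}=\cD({\bf A}^{r})$, and the two displayed special cases are the instances $r=1/2$ and $r=1$. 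The only thing being used is the identity $2r/2=r$ at the level of exponents, which is legitimate precisely because the powers are defined spectrally.

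For part (2) the key device is to view $U_r:={\bf A}^{r/2}$ as a map from $\cH_r$ to $\cH$. By the definition of $\langle\cdot,\cdot\rangle_r$ one has $\|U_rx\|=\|x\|_r$, so $U_r$ is isometric, and since ${\bf A}^{-r/2}$ is bounded it is also surjective; hence $U_r\colon\cH_r\to\cH$ is unitary. I would then show that the $r$-th left-definite operator is exactly ${\bf A}_r=U_r^{-1}{\bf A}\,U_r$. Self-adjointness of ${\bf A}_r$ on $\cH_r$ is immediate from unitary equivalence with the self-adjoint ${\bf A}$, and since ${\bf A}^{-r/2}{\bf A}\,{\bf A}^{r/2}$ agrees with ${\bf A}$ on every vector for which both sides are defined, ${\bf A}_r$ is a restriction of ${\bf A}$; this is exactly the defining property of the left-definite operator.

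It then remains to compute $\cD({\bf A}_r)$. A vector $x$ lies in $\cD({\bf A}_r)$ precisely when $x\in\cH_r=\cD({\bf A}^{r/2})$ and $U_rx={\bf A}^{r/2}x\in\cD({\bf A})$. Writing $\mu_x$ for the scalar spectral measure $\langle E(\cdot)x,x\rangle$ and using that the spectral family $E$ commutes with ${\bf A}^{r/2}$, the second condition reads $\int_k^\infty \la^{2}\,\la^{r}\,d\mu_x(\la)<\infty$, that is $\int_k^\infty \la^{r+2}\,d\mu_x(\la)<\infty$, which is exactly the membership criterion for $\cD({\bf A}^{(r+2)/2})$; note this automatically forces $x\in\cD({\bf A}^{r/2})$ as well, since domains of powers are nested. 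Hence $\cD({\bf A}_r)=\cD({\bf A}^{(r+2)/2})$, and the cases $r=1,2$ give the stated special cases. Since $(r+2)/2>1$ for every $r>0$, this domain automatically sits inside $\cD({\bf A})$, consistent with ${\bf A}_r$ being a restriction of ${\bf A}$.

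I expect the main obstacle to be conceptual rather than computational: one must justify that the operator $U_r^{-1}{\bf A}\,U_r$ produced by the unitary equivalence really is \emph{the} $r$-th left-definite operator, i.e.~that it both restricts ${\bf A}$ and matches the prescription of the defining statement in the two regimes $r\ge1$ and $0<r<1$. The spectral approach is attractive here because it treats all $r>0$ at once and thereby subsumes the separate closure clause used for $0<r<1$: the functional calculus builds ${\bf A}^{(r+2)/2}$ directly as a self-adjoint operator, so no extra closure step is needed once the unitary identification is in place. Establishing that identification cleanly, together with the uniqueness of ${\bf A}_r$, is where I would spend the most care.
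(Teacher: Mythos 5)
The manuscript does not prove this corollary; it is quoted verbatim from \cite[Corollary 3.3]{LW02}, where the argument is exactly the spectral-calculus one you outline. Your proof is correct and takes essentially that same route: part (1) is immediate from Definition \ref{t-ldinpro} once $\cV_\rho=\cD({\bf A}^{\rho/2})$ is accepted, and part (2) reduces, via the unitary $U_r={\bf A}^{r/2}\colon\cH_r\to\cH$ and the composition law $\cD({\bf A}\,{\bf A}^{r/2})=\cD({\bf A}^{(r+2)/2})$ (valid because $\sigma({\bf A})\subset[k,\infty)$ with $k>0$ nests the domains of powers), to the computation $\int_k^\infty\la^{r+2}\,d\mu_x<\infty$. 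The one point you rightly flag as needing care --- that $U_r^{-1}{\bf A}U_r$ is \emph{the} left-definite operator --- is precisely the content of the uniqueness statement \cite[Theorem 3.2]{LW02}, which the corollary presupposes; constructing one self-adjoint restriction of ${\bf A}$ in $\cH_r$ and invoking that uniqueness closes the argument.
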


The left-definite theory is particularly important for self-adjoint differential operators that are bounded below, as they are generally unbounded. The theory is trivial for bounded operators, as shown in {\cite[Theorem 3.4]{LW02}}.

Our applications of left-definite theory will be focused on differential operators which possess a complete orthogonal set of eigenfunctions in $\cH$. In {\cite[Theorem 3.6]{LW02}} it was proved that the point spectrum of ${\bf A}$ coincides with that of ${\bf A}_r$, and similarly for the continuous spectrum and for the resolvent set. 
It is possible to say more, a complete set of orthogonal eigenfunctions will persist throughout each space in the Hilbert scale.

\begin{theorem}[{\cite[Theorem 3.7]{LW02}}] \label{t-leftdefortho}
If $\{\f_n\}_{n=0}^{\infty}$ is a complete orthogonal set of eigenfunctions of ${\bf A}$ in $\cH$, then for each $r>0$, $\{\f_n\}_{n=0}^{\infty}$ is a complete set of orthogonal eigenfunctions of the $r$-th left-definite operator ${\bf A}_r$ in the $r$-th left-definite space $\cH_r$.
\end{theorem}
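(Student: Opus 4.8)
The plan is to push everything through the functional calculus of ${\bf A}$, reducing each assertion about $\cH_r$ to a known statement about $\cH$. First I would record that, since ${\bf A}$ is bounded below by $kI$ with $k>0$, every eigenvalue satisfies $\lambda_n\geq k>0$ and every eigenfunction lies in $\cD({\bf A}^s)$ for all $s>0$, with ${\bf A}^s\f_n=\lambda_n^s\f_n$ by the functional calculus. In particular $\f_n\in\cV_r=\cD({\bf A}^{r/2})$, so the $\f_n$ genuinely belong to $\cH_r$.

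For orthogonality in $\cH_r$, I would compute directly from Definition \ref{t-ldinpro}: for $m\neq n$,
\begin{align*}
\langle\f_m,\f_n\rangle_r=\langle {\bf A}^{r/2}\f_m,{\bf A}^{r/2}\f_n\rangle=\lambda_m^{r/2}\lambda_n^{r/2}\langle\f_m,\f_n\rangle=0,
\end{align*}
since the $\f_n$ are already orthogonal in $\cH$. To identify them as eigenfunctions of ${\bf A}_r$, I would invoke Corollary \ref{t-comppower}, which gives $\cD({\bf A}_r)=\cD({\bf A}^{(r+2)/2})$; each $\f_n$ lies in this domain, and because ${\bf A}_r$ is a restriction of ${\bf A}$ acting via the same expression, we obtain ${\bf A}_r\f_n={\bf A}\f_n=\lambda_n\f_n$. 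This covers both the $r\geq 1$ case and the $0<r<1$ case, since the domain identity holds for all $r>0$.

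The substantive step is completeness in $\cH_r$, and this is where I would concentrate the work. The cleanest route is to show that the only element of $\cV_r$ that is $\langle\fdot,\fdot\rangle_r$-orthogonal to every $\f_n$ is $0$. Suppose $x\in\cV_r$ satisfies $\langle x,\f_n\rangle_r=0$ for all $n$. Unwinding the definition gives $\lambda_n^{r/2}\langle {\bf A}^{r/2}x,\f_n\rangle=0$, and since $\lambda_n^{r/2}>0$ this forces $\langle {\bf A}^{r/2}x,\f_n\rangle=0$ for all $n$. Now ${\bf A}^{r/2}x\in\cH$ is orthogonal to the complete set $\{\f_n\}$ in $\cH$, hence ${\bf A}^{r/2}x=0$; because $k>0$ keeps $0$ outside the spectrum of ${\bf A}$, the operator ${\bf A}^{r/2}$ is injective, and therefore $x=0$.

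The main obstacle I anticipate is bookkeeping rather than conceptual: one must check that every pairing written is well-defined, i.e.~that membership in $\cV_r=\cD({\bf A}^{r/2})$ (rather than the stronger $\cD({\bf A}^r)$) already suffices to make sense of $\langle {\bf A}^{r/2}x,{\bf A}^{r/2}\f_n\rangle$, and that the completeness argument uses only this. A secondary point is to confirm that $\cH_r$ is the space in which ``complete'' is meant; here I would appeal to items (1)--(3) following Definition \ref{t-ldinpro}, which guarantee that $\cH_r$ is a Hilbert space with $\cD({\bf A}^r)$ dense, so that density of the closed span of $\{\f_n\}$ is equivalent to the orthogonality criterion used above.
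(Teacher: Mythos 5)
The paper states this result only as a citation of \cite[Theorem 3.7]{LW02} and supplies no proof of its own, so there is nothing internal to compare against. Your argument is correct and is essentially the standard functional-calculus proof from \cite{LW02}: membership in $\cV_r$ and orthogonality follow from ${\bf A}^{r/2}\f_n=\lambda_n^{r/2}\f_n$ with $\lambda_n\geq k>0$, the eigenfunction property follows since ${\bf A}_r$ restricts ${\bf A}$ on $\cD({\bf A}^{(r+2)/2})$, and completeness follows from injectivity of ${\bf A}^{r/2}$ combined with completeness of $\{\f_n\}$ in $\cH$.
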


Another perspective on the last theorem is that it gives us a valuable indicator for when a space is a left-definite space for a specific operator. Also, we note that left-definite theory can be extended to bounded below operators by applying shifts. Uniqueness is then given up to the chosen shift.

One of the main applications of the theory is to Sturm--Liouville operators. In particular, if the operator has a complete system of orthogonal eigenfunctions then proving boundedness from below is easier and it is possible to invoke Theorem \ref{t-leftdefortho}. An example of how left-definite theory can be applied to such an operator is now briefly described, full details can be found in \cite[Section 12]{LW02}.

\begin{example}\label{e-laguerre}
For $\al>-1$, consider the classical self-adjoint Laguerre differential operator ${\bf A}$ acting on $\cH=L^2\left[(0,\infty),x^{\al}e^{-x}\right]$ via
\begin{align*}
    \ell[f](x)=\dfrac{1}{x^{\al}e^{-x}}\left[-x^{\al+1}e^{-x}f'(x)\right]',
\end{align*}
such that $\dom({\bf A})$ possesses the Laguerre polynomials as a complete set of orthogonal eigenfunctions. The $n$th left-definite Hilbert space associated with the pair $(\cH,{\bf A})$, also possessing this complete set of eigenfunctions, is defined as $\cH_n=(\cV_n,\langle\fdot,\fdot\rangle_n)$, where
\begin{align*}
\cV_n:=\left\{f:(0,\infty)\to\CC ~~\bigg|~~ f\in\text{AC}\ti{loc}^{(n-1)}(0,\infty);~ f^{(n)}\in L^2\left[(0,\infty),t^{\al+n}e^{-t}\right]\right\}
\end{align*}
and
\begin{align*}
\langle p,q\rangle_n:=\sum_{j=0}^n b_j(n,k)\int_0^{\infty} p^{(j)}(t)\overline{q^{(j)}(t)}t^{\al+j}e^{-t}dt ~~\text{ for } (p,q\in\mathcal{P}),
\end{align*}
where $\mathcal{P}$ is the space of all (possibly complex-valued) polynomials. The constants $b_j(n,k)$ are defined as
\begin{align*}
b_j(n,k):=\sum_{i=0}^j \dfrac{(-1)^{i+j}}{j!}\binom{j}{i}(k+i)^n.
\end{align*}
\end{example}

For several years after the discovery of the general left-definite theory, descriptions of left-definite spaces were similar to those of Example \ref{e-laguerre}, i.e.~the boundary conditions were not classically expressed by GKN theory. Similar results for specific (mostly classical) operators can be found in \cite{BLTW, EKLWY, ELT} and their references. Some progress towards expressing left-definite spaces in terms of these standard boundary conditions was made much later in \cite{LW15} and then expanded upon in \cite{FFL}. 

In order to present the main result from \cite{FFL}, we let ${\bf L}^n$ be a self-adjoint operator defined by left-definite theory on $L^2[(a,b),w]$ with domain $\cD_{\bf L}^n$ that includes a complete system of orthogonal eigenfunctions. Enumerate the orthogonal eigenfunctions as $\{P_k\}_{k=0}^{\infty}$. Let ${\bf L}^n$ operate on its domain via $\ell^n[\fdot]$, a differential operator of order $2n$, $n\in\NN$, generated by composing a Sturm--Liouville differential expression with itself $n$ times. Furthermore, let ${\bf L}^n$ be an extension of the minimal operator $L^n\ti{min}$ that has deficiency indices $(n,n)$, and the associated maximal domain be denoted by $\cD\ti{max}^n$. See the Appendix for the general definitions of these domains.

This allows us to compare several different potential descriptions of the left-definite domain. Consider
\begin{align*} 
\cA_n&:=\Big\{f\in \cD\ti{max}^n~:~f,f',\dots,f^{(2n-1)}\in AC\ti{loc}(a,b);
\\
&\hspace{4.8cm}(p(x))^n f^{(2n)}\in L^2[(a,b),w]\Big\},
\\
\cB_n&:=\left\{f\in \cD\ti{max}^n~:~
[f,P_j]_n\Big|_a^b=0 \text{ for }j=0,1,\dots,n-1\right\}, 
\\
\cC_n&:=\left\{f\in \cD\ti{max}^n~:~
[f,P_j]_n\Big|_a^b=0 \text{ for any }n \text{ distinct }j\in\NN \right\}, \text{ and}
\\
\cF_n&:=\left\{f\in \cD\ti{max}^n~:~\left[a_j(x)
f^{(j)}(x)\right]^{(j-1)}\Big|_a^b=0 \text{ for }j=1,2,\dots,n
\right\}.
\end{align*}

The function $p(x)$ above is from the standard definition of a Sturm-Liouville differential operator, given in equation \eqref{d-sturmop}, and the $a_j(x)$'s are from the Lagrangian symmetric form of the operator in equation \eqref{e-lagrangian}. The following conjecture about the equality of these domains is found in \cite{FFL}, which was in turn adapted from \cite{LW15}.

\begin{conjecture}\label{c-equalityofdomains}
Let ${\bf L}^n$ be a self-adjoint operator defined by left-definite theory on $L^2[(a,b),w]$ with domain $\cD_{\bf L}^n$ that includes a complete system of orthogonal polynomial eigenfunctions, that is, we use $\cD_{\bf L}^n = \cA_n$. Let ${\bf L}^n$ operate on its domain via the expression $\ell^n[\fdot]$, a differential operator of order $2n$, where $n\in\NN$, generated by composing a Sturm--Liouville differential operator with itself $n$ times. Furthermore, let ${\bf L}^n$ be an extension of the minimal operator $L^n\ti{min}$, which has deficiency indices $(n,n)$. Then $\cA_n=\cB_n=\cC_n=\cF_n=\cD_{\bf L}^n$, $\forall n\in\NN$.
\end{conjecture}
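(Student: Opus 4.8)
The plan is to prove the chain of equalities $\cA_n=\cB_n=\cC_n=\cF_n=\cD_{\bf L}^n$ by first establishing that all four sets describe the \emph{same} self-adjoint extension of $L^n\ti{min}$, and then identifying that extension with the left-definite domain $\cD_{\bf L}^n$ via the spectral characterization in Theorem \ref{t-leftdefortho}. The starting point is the GKN theory of self-adjoint extensions (see the Appendix): since $L^n\ti{min}$ has deficiency indices $(n,n)$, every self-adjoint extension is specified by exactly $n$ independent boundary conditions of the form $[f,g_j]_n\big|_a^b=0$ for suitable $g_j\in\cD\ti{max}^n$, where $[\fdot,\fdot]_n$ is the sesquilinear form associated with $\ell^n$ through Green's formula \eqref{e-greens}. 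The key structural fact is that the eigenfunctions $P_0,\dots,P_{n-1}$ (or, in the polynomial case, any $n$ of the $P_k$) are genuine elements of $\cD\ti{max}^n$, so that each $[f,P_j]_n\big|_a^b$ is a well-defined boundary functional.

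First I would prove $\cB_n=\cC_n$. The inclusion $\cC_n\subseteq\cB_n$ is immediate by taking $j=0,1,\dots,n-1$. For the reverse, the idea is that the $n$ boundary functionals $f\mapsto[f,P_j]_n\big|_a^b$ span an $n$-dimensional space as $j$ ranges over $\NN$ (this dimension is forced by the deficiency index $(n,n)$, since the boundary form has rank $2n$ and the symmetric operator already annihilates half of it), so imposing vanishing on \emph{any} $n$ linearly independent choices of $P_j$ cuts out the same subspace as imposing it on the first $n$. The main point to verify here is the linear independence of the boundary functionals coming from distinct eigenfunctions; this should follow from the completeness and orthogonality of $\{P_k\}$ together with the fact that distinct eigenvalues force the Wronskian-type boundary pairings to be non-degenerate.

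Next I would show $\cB_n=\cA_n$ and $\cB_n=\cF_n$ by direct manipulation of the Dirichlet/Green's formulas. For $\cA_n$, the conditions defining it are smoothness plus the integrability $(p(x))^n f^{(2n)}\in L^2[(a,b),w]$; one expands $[f,P_j]_n\big|_a^b$ using the Dirichlet formula \eqref{e-dirichlet} and shows that, because each $P_j$ is an eigenfunction lying in the operator domain, the boundary term $[f,P_j]_n\big|_a^b$ reduces to precisely the boundary contributions controlled by the $\cA_n$-regularity, forcing them to vanish. The identity $\cB_n=\cF_n$ is obtained by integrating Green's formula by parts $j$ times in each summand of \eqref{e-lagrangian}, which rewrites the sesquilinear form $[f,P_j]_n$ against an eigenfunction as a linear combination of the quasi-derivative boundary terms $[a_j(x)f^{(j)}(x)]^{(j-1)}\big|_a^b$; matching the two descriptions is then a question of showing the change-of-basis matrix between the two families of boundary functionals is invertible, which again uses positivity and smoothness of the coefficients $a_j$.

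The final and hardest step is $\cA_n=\cD_{\bf L}^n$, i.e.\ identifying this common domain with the left-definite domain. Here the plan is to invoke Theorem \ref{t-leftdefortho}: the operator defined on $\cA_n$ (acting via $\ell^n$) has $\{P_k\}$ as a complete orthogonal eigenfunction system, and by Corollary \ref{t-comppower} the left-definite domain $\cD_{\bf L}^n=\cD({\bf L}^n)$ is the unique self-adjoint extension retaining exactly this spectral resolution; since self-adjoint extensions with a prescribed complete orthonormal eigenbasis are unique, the two must coincide. \emph{I expect this uniqueness step to be the main obstacle}, because it requires showing that $\cA_n$ actually defines a self-adjoint (not merely symmetric) operator and that no regularity is lost when passing between the $L^2[(a,b),w]$ setting and the left-definite inner product $\langle\fdot,\fdot\rangle_n$ of Definition \ref{t-ldinpro}; in the limit-circle case the boundary behavior is delicate and the equivalence of the integrability condition in $\cA_n$ with membership in $\cD({\bf A}^{n})$ is exactly the content that resisted proof in \cite{LW15, FFL} and keeps the statement at the level of a conjecture rather than a theorem.
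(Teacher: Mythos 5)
The statement you are proving is Conjecture~\ref{c-equalityofdomains}; the paper offers no proof of it. It records only that \cite[Theorem 6.5]{FFL} establishes the chain of equalities under additional hypotheses (namely that $\cA_n=\cB_n$ is \emph{assumed}, and that $f\in\cF_n$ is assumed to imply $f'',\dots,f^{(2n-2)}\in L^2(a,b)$), and that \cite{FL} settles the Jacobi case by an entirely different method (explicit infinite-series decompositions of the deficiency spaces). So there is no argument in the paper to compare yours against, and the question is whether your outline closes the conjecture. It does not: each step beyond what is already known in the literature contains a genuine gap.

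Concretely: (i) for $\cB_n=\cC_n$ you assert that the boundary functionals $f\mapsto[f,P_j]_n\big|_a^b$ span an $n$-dimensional space ``forced by the deficiency index,'' so that any $n$ distinct eigenfunctions cut out the same domain. The deficiency index bounds the dimension from above but does not give linear independence of any particular $n$ of these functionals modulo $\cD\ti{min}^n$; establishing exactly this is the content of the matrix-of-boundary-values analysis in \cite{FFL} and of the deficiency-space decomposition in \cite{FL}, and it is where the general case remains open. (ii) You propose to get $\cA_n=\cB_n$ by ``direct manipulation of the Dirichlet/Green's formulas,'' but this equality is precisely one of the unproved hypotheses under which the partial result of \cite{FFL} operates; the difficulty is a regularity statement (that the smoothness and weighted integrability conditions defining $\cA_n$ force the GKN boundary terms against the $P_j$ to vanish, and conversely), not an algebraic identity. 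The same applies to $\cB_n=\cF_n$: your ``change-of-basis matrix is invertible'' step presupposes control of intermediate derivatives of $f$ near limit-circle endpoints, which is the second unproved assumption in \cite{FFL}. (iii) You correctly flag the identification of the common domain with $\cD({\bf A}^n)$ as the main obstacle and do not resolve it. Your plan is a reasonable roadmap and correctly locates the difficulties, but as written it reproduces the known partial strategy while leaving open exactly the steps that keep the statement a conjecture.
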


The conjecture was partially answered in \cite[Theorem 6.5]{FFL} under several extra assumptions: that $\cA_n=\cB_n$ and that $f\in \cF_n$ implies that $f'',\dots,f^{(2n-2)}\in L^2(a,b)$. The two primary ideas of the proof were a careful analysis of the sesquilinear form for the operator ${\bf L}^n$ and the introduction of a matrix of boundary values to help determine when GKN conditions were satisfied or not. The conjecture was answered in the affirmative for the Jacobi differential operator in \cite{FL}.

\begin{theorem}\label{t-leftdefdomains}
Let ${\bf L}^n$, $n\in\NN$, be a self-adjoint operator defined by left-definite theory on $L^2[(a,b),w]$ with the left-definite domain $\cD_{\bf L}^n$. Let ${\bf L}^n$ operate on its domain via $\ell^n[\fdot]$, a classical Jacobi differential expression of order $2n$, with parameters $\al,\beta>0$, generated by composing the Sturm--Liouville operator with itself $n$ times. Furthermore, let ${\bf L}^n$ be an extension of the minimal operator $L^n\ti{min}$, which has deficiency indices $(2n,2n)$. Then $\cD_{\bf L}^n=\cA_n=\cB_n=\cC_n=\cF_n$, $\forall n\in\NN$.
\end{theorem}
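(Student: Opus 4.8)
The plan is to prove the four set equalities through a cycle of inclusions, first pinning down the analytic structure of the left-definite inner product, then invoking GKN self-adjoint extension theory, and finally closing the cycle with the explicit endpoint asymptotics available for the Jacobi expression. I would begin by identifying $\cD_{\bf L}^n=\cA_n$. By Corollary \ref{t-comppower} the left-definite domain coincides with a power-domain of the underlying second-order operator, and a function in such a domain must be locally absolutely continuous together with its first $2n-1$ derivatives, while the top-order weighted term $(p(x))^n f^{(2n)}$ must lie in $L^2[(a,b),w]$; this is exactly the defining regularity of $\cA_n$. The point is that $\ell^n$ is a nondegenerate expression of order $2n$ on the open interval, so interior regularity is automatic and only the endpoint behavior carries information.

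Next I would establish $\cA_n=\cF_n$ using Dirichlet's formula \eqref{e-dirichlet}. For $f,g$ in the left-definite domain the inner product $\langle {\bf L}^n f,g\rangle$ equals the pure Dirichlet integral, so the boundary contributions $\bigl[a_j(x)f^{(j)}(x)\bigr]^{(j-1)}\big|_a^b$ must all vanish; conversely, the vanishing of precisely these terms is what defines $\cF_n$. For the Jacobi expression the coefficients $a_j(x)$ are explicit multiples of powers of $(1-x^2)$, and inserting the Frobenius expansions of maximal-domain functions at $x=\pm 1$ reveals which boundary terms vanish automatically for every $f\in\cD\ti{max}^n$ and which impose genuine constraints. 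The parameter restriction $\al,\beta>0$ (yielding deficiency indices $(2n,2n)$) is exactly what guarantees that $n$ independent constraints survive at each endpoint.

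The heart of the argument is the passage through the eigenfunction conditions, i.e.~the equalities $\cF_n=\cB_n$ and $\cB_n=\cC_n$. Here I would adopt the boundary-value matrix method of \cite[Theorem 6.5]{FFL}: arrange the sesquilinear boundary pairings $[\fdot,P_j]_n$ against a fixed basis of $\cD\ti{max}^n/\cD\ti{min}^n$ into a matrix, and translate the GKN description of the self-adjoint extension ${\bf L}^n$ into a rank condition on that matrix. Since each eigenfunction $P_j$ already lies in $\dom({\bf L}^n)$, the pairings $[P_i,P_j]_n\big|_a^b$ vanish, so the polynomials span an isotropic subspace; the task is to show they in fact determine the full Lagrangian subspace cut out by $\cF_n$. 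Using the explicit endpoint asymptotics of the Jacobi polynomials and of the associated Jacobi functions, the relevant determinant becomes a Wronskian/Vandermonde-type expression that I expect to be nonzero. This non-degeneracy simultaneously yields $\cF_n=\cB_n$ and shows that replacing $P_0,\dots,P_{n-1}$ by any $n$ distinct eigenfunctions leaves the rank unchanged, giving $\cB_n=\cC_n$ (the inclusion $\cC_n\subseteq\cB_n$ being trivial).

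The main obstacle will be this non-degeneracy computation, compounded by an apparent bookkeeping mismatch: the operator has deficiency indices $(2n,2n)$, so a self-adjoint extension requires $2n$ boundary conditions, whereas $\cB_n$ lists only $n$. The resolution I would pursue is that each polynomial condition $[f,P_j]_n\big|_a^b=0$ \emph{decouples} across the two endpoints, contributing one constraint at $a$ and one at $b$, so that the $n$ polynomials supply the full complement of $2n$ separated conditions. Verifying this decoupling and the attendant rank maximality through the explicit Jacobi endpoint data is where the real work lies, and it is precisely the step that forces the restriction to the Jacobi setting rather than to a general Sturm--Liouville expression.
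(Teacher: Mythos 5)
Your plan reproduces, essentially step for step, the boundary-value-matrix strategy of \cite{FFL}, whereas the theorem as stated was actually proved in \cite{FL} by a genuinely different method: there, solutions of the Jacobi equation near the endpoints are written as explicit infinite series, only finitely many terms of which survive modulo the minimal domain, and this finite, explicit decomposition of $\cD\ti{max}^n$ modulo $\cD\ti{min}^n$ is what makes the sesquilinear-form manipulations tractable. The distinction matters because the route you propose is known to deliver only a conditional result: \cite[Theorem 6.5]{FFL} obtains the chain of equalities only under the extra hypotheses that $\cA_n=\cB_n$ and that $f\in\cF_n$ forces $f'',\dots,f^{(2n-2)}\in L^2(a,b)$, and your sketch quietly assumes both. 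First, you assert that Corollary \ref{t-comppower} identifies $\cD_{\bf L}^n$ with $\cA_n$; it does not. That corollary identifies the left-definite domain with a power domain $\cD({\bf A}^{(r+2)/2})$ purely abstractly, and extracting from this the concrete smoothness-plus-integrability description $\cA_n$ is one of the substantive equalities to be proved, not a free starting point. Second, your passage from \eqref{e-dirichlet} to $\cA_n=\cF_n$ requires repeated integrations by parts whose intermediate boundary terms only make sense if $f'',\dots,f^{(2n-2)}$ have sufficient integrability --- exactly the hypothesis \cite{FFL} could not remove. Third, the ``Wronskian/Vandermonde-type'' non-degeneracy you defer to the end is precisely where the matrix method stalls; the resolution in \cite{FL} does not come from evaluating such a determinant but from the series decomposition of the deficiency spaces.

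On the credit side, you correctly flag the apparent mismatch between the $(2n,2n)$ deficiency indices and the $n$ conditions listed in $\cB_n$, and your proposed resolution --- that each condition $[f,P_j]_n\big|_a^b=0$ decouples into separated conditions at $a$ and at $b$ --- is the one the paper endorses. But as a proof of the theorem the proposal is a plan for re-deriving the conditional result of \cite{FFL}, not for closing the gaps that the Jacobi-specific argument of \cite{FL} was designed to close.
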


Note that the apparent discrepancy between deficiency indices here is merely due to the use of separated boundary conditions instead of connected ones. It is also worth pointing out that the methods of \cite{FL} differ greatly from those of \cite{FFL}. In particular, solutions to the Jacobi differential equation are written as infinite sums and only certain terms are shown to belong to the maximal domain modulo the minimal domain. This finite, explicit decomposition of the deficiency spaces result in easier manipulations of the sesquilinear form to prove the equality of the domains. Conjecture \ref{c-equalityofdomains} remains open for other operators; the methods of \cite{FL} are almost certainly applicable to wider classes of operators and may be helpful. In particular, operators which do not possess a complete system of orthogonal polynomial eigenfunctions have not been considered. A possible weakening of this hypothesis would be to replace the orthogonal polynomials with principal solutions. Principal solutions are intimately related to the Friedrichs extension of a symmetric operator, which is the subject of our last conjecture.

The Friedrichs extension is usually defined through the closed semi-bounded form associated with a self-adjoint operator, see Section \ref{s-BKV} (specifically equation \eqref{e-semiformgen}) for more about these forms or \cite{BdS, BFL, MZ} for complete details. However, it suffices to think of the extension as the ``smallest'' self-adjoint extension among all other self-adjoint extensions (in the sense that it has the smallest form domain); it is often called the ``soft'' extension for this reason.

\begin{conjecture}\label{Friedrichs}
Let $A$ be a closed semi-bounded symmetric operator and ${\bf A}_F$ be its Friedrichs self-adjoint extension. Then the $2r$-th left-definite space of ${\bf A}_F$ coincides with the domain of the Friedrichs extension of the $r$-th power of ${\bf A}$. Explicitly, 
\begin{align*}
\dom(({\bf A}_F)^r)=\dom(({\bf A}^r)_F).
\end{align*}
\end{conjecture}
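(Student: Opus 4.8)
The plan is to establish the stronger \emph{operator} identity $({\bf A}_F)^r=({\bf A}^r)_F$, from which the stated equality of domains is immediate; recall that by Definition \ref{t-ldinpro} the $2r$-th left-definite space of ${\bf A}_F$ is precisely $\cV_{2r}=\dom(({\bf A}_F)^r)$, so this is exactly the left-hand side. Throughout I would normalize ${\bf A}_F\geq kI$ with $k>0$ (as left-definite theory requires), so that every power $({\bf A}_F)^r$ is self-adjoint by the spectral theorem and $A^r\geq k^rI$ stays semi-bounded with a well-defined Friedrichs extension. Since powers of a merely symmetric operator are unambiguous only for integer exponents, I would first treat $r=n\in\NN$ and return to real $r$ afterward by interpreting $A^r$ within the Friedrichs scale and interpolating along the Hilbert scale of \cite{AK,KP}.

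First I would record that $A^n\subseteq({\bf A}_F)^n$: from $A\subseteq{\bf A}_F$ an easy induction on the composition domains shows that $f\in\dom(A^n)$ forces $f\in\dom(({\bf A}_F)^n)$ with $({\bf A}_F)^nf=A^nf$. Hence $({\bf A}_F)^n$ is a non-negative self-adjoint extension of the semi-bounded symmetric operator $A^n$, and $({\bf A}^n)_F$ is its Friedrichs extension. By the BKV correspondence of Section \ref{s-BKV} (Kato's first representation theorem), the closed form of $({\bf A}^n)_F$ is by construction the closure of $\ft_{A^n}[f]=\langle A^nf,f\rangle$ on $\dom(A^n)$; moreover, because $A^n\subseteq({\bf A}_F)^n$ gives $\langle A^nf,f\rangle=\|({\bf A}_F)^{n/2}f\|^2$ for $f\in\dom(A^n)$, the form of $({\bf A}_F)^n$ restricts to this same expression on $\dom(A^n)$. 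Consequently the entire identity collapses to a single \textbf{core statement}: that $\dom(A^n)$ is a form core for $({\bf A}_F)^n$, i.e.
\begin{align*}
\dom\big(({\bf A}_F)^{n/2}\big)=\overline{\dom(A^n)}^{\,\|\fdot\|_{({\bf A}_F)^{n/2}}}.
\end{align*}
One inclusion here, $\overline{\dom(A^n)}\subseteq\dom(({\bf A}_F)^{n/2})$, is automatic from the maximality of the Friedrichs form domain in BKV theory, so all the content lies in the reverse inclusion.

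This density assertion is where I expect the real difficulty, and it cannot follow by abstract nonsense alone: it is known that powers of a Friedrichs extension need not again be Friedrichs, so some structural hypothesis must enter and the conjecture in full generality may require one. In the setting emphasized throughout this manuscript the obstacle dissolves. If ${\bf A}_F$ possesses a complete orthogonal system of eigenfunctions $\{\f_k\}$ lying in $\dom(A^n)$ (for instance orthogonal polynomials, which satisfy all the boundary conditions defining $\cA_n$), then by Theorem \ref{t-leftdefortho} their finite linear combinations are dense in every left-definite space $\dom(({\bf A}_F)^{n/2})$ while simultaneously sitting inside $\dom(A^n)$; this yields the core property at once, hence $({\bf A}_F)^n=({\bf A}^n)_F$.

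For a general closed semi-bounded $A$ I would attempt to manufacture such a core by regularizing an arbitrary $f\in\dom(({\bf A}_F)^{n/2})$ through the spectral cut-offs $E_{[0,m]}({\bf A}_F)f$, which lie in $\dom(({\bf A}_F)^s)$ for every $s$, and then approximating each cut-off in the $({\bf A}_F)^{n/2}$-graph norm by genuine elements of $\dom(A^n)$. The first step is harmless, but controlling this second approximation — showing that membership in the domain of every power of the \emph{extension} can be recovered, in form norm, from the boundary-constrained domain $\dom(A^n)$ of the \emph{symmetric} operator — is precisely the step that fails in the abstract counterexamples. This is the crux on which a proof of the full conjecture will stand or fall, and identifying the minimal structural hypothesis (completeness of eigenfunctions, a principal-solution condition at limit-circle endpoints, or regularity of the form core) that forces it is, in my view, the decisive open problem here.
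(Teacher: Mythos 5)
This statement is posed in the paper as an open conjecture --- the authors offer no proof, only verification for the Jacobi operator in \cite[Cor.~5.1]{F} by explicit domain computations --- so there is no argument of theirs to compare yours against; the question is whether your attempt closes the problem, and it does not. Your reduction is sound and worth keeping: normalizing ${\bf A}_F\geq kI$, showing $A^n\subseteq({\bf A}_F)^n$, and observing that the two candidate operators coincide if and only if $\dom(A^n)$ is a form core for $({\bf A}_F)^n$ is a correct and clean reformulation, and you rightly note that one inclusion of form domains is automatic.

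The gap is in the one case where you claim the argument closes. You assume the eigenfunctions $\{\f_k\}$ of ${\bf A}_F$ lie in $\dom(A^n)$, where $A$ is the closed \emph{symmetric} operator --- in the Sturm--Liouville setting this is (the closure of) the minimal operator, whose membership requires $[f,g]\big|_a^b=0$ for \emph{all} $g\in\cD\ti{max}$. Orthogonal polynomial eigenfunctions do not satisfy this: already $P_0\equiv 1$ fails it for the Legendre operator, since $[1,g](x)=-(1-x^2)g'(x)$ does not vanish at $\pm1$ for every maximal-domain $g$. Your parenthetical justification conflates $\dom(A^n)$ with $\cA_n$, but $\cA_n$ is a candidate for the \emph{self-adjoint} left-definite domain $\cD_{\bf L}^n$, which strictly contains the minimal domain whenever the deficiency indices are positive. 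So the hypothesis under which your core argument succeeds is essentially only satisfied in the trivial case of deficiency indices $(0,0)$, and in particular it does not recover the Jacobi case that the paper cites as evidence. Your own caveat about powers of Friedrichs extensions is in fact the decisive one: for the regular operator $-f''$ on $(0,1)$ the square of the Dirichlet Laplacian (conditions $f=f''=0$) differs from the Friedrichs extension of the squared minimal operator (conditions $f=f'=0$), so the conjecture as literally stated needs a restricting hypothesis --- identifying that hypothesis, as you say at the end, is the real problem, and your proposal leaves it open.
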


In other words, the conjecture is suggesting that the action of taking powers of an operator commutes with the action of taking the Friedrichs extension. In every computed case the conjecture seems to hold but in the stated generality the status is unclear. Verification in the case where $A$ is the Jacobi differential operator can be found in \cite[Cor.~5.1]{F}. Left-definite theory need not be mentioned in the statement of the conjecture, clearly, but it is the authors' opinion that left-definite theory can nonetheless be of great help in proving the statement.

A related conjecture (that would build upon the spectral stability results of Theorem \ref{t-leftdefortho} if confirmed) posits that multiplicity of eigenvalues are invariant under left-definite theory.

\begin{conjecture}\label{c-multiplicity}
Let ${\bf A}$ be a semi-bounded self-adjoint operator in a Hilbert space $\cH$. Let $\la$ be an eigenvalue of ${\bf A}$ with multiplicity $m$. Then $\la$ is also an eigenvalue for the $r$-th left-definite operator ${\bf A}_r$ in the $r$-th left-definite space $\cH_r$ of multiplicity $m$.
\end{conjecture}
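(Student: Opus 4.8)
\section*{Proof proposal for Conjecture \ref{c-multiplicity}}

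The plan is to prove a sharper statement, namely that the $\la$-eigenspaces $\Ker({\bf A}-\la I)$ (inside $\cH$) and $\Ker({\bf A}_r-\la I)$ (inside $\cH_r$) coincide as sets of vectors. Since for a self-adjoint operator the multiplicity of an eigenvalue is exactly the dimension of its eigenspace, and the dimension of a given complex vector space does not depend on which of the two inner products $\langle\fdot,\fdot\rangle$ or $\langle\fdot,\fdot\rangle_r$ we equip it with (separability guarantees this also when $m=\infty$, where both Hilbert dimensions equal $\aleph_0$), the desired equality of multiplicities is immediate once the two kernels are identified.

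First I would reduce to the case ${\bf A}\geq kI$ with $k>0$. For a merely semi-bounded operator one passes to ${\bf A}+cI$ for a suitable $c\in\RR$; this translates $\la\mapsto\la+c$ but fixes every eigenvector and hence every multiplicity, and it is precisely through such a shift that the left-definite objects are defined for semi-bounded (rather than strictly positive) operators, as noted after Theorem \ref{t-leftdefortho}. With ${\bf A}\geq kI>0$ in force, I recall from the definition of the left-definite operator together with Corollary \ref{t-comppower} that ${\bf A}_r$ is nothing but the restriction of ${\bf A}$ to $\dom({\bf A}_r)=\dom({\bf A}^{(r+2)/2})\subseteq\dom({\bf A})$; in particular ${\bf A}_r f={\bf A}f$ for every $f\in\dom({\bf A}_r)$.

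The two inclusions then follow quickly. For $\Ker({\bf A}-\la I)\subseteq\Ker({\bf A}_r-\la I)$, suppose ${\bf A}v=\la v$. Then $v$ lies in the range of the spectral projection $P_{\{\la\}}$, so the functional calculus yields ${\bf A}^s v=\la^s v$ and $\int t^{2s}\,d\langle P_t v,v\rangle=\la^{2s}\|v\|^2<\infty$, whence $v\in\dom({\bf A}^s)$ for every $s\geq0$; in particular $v\in\dom({\bf A}^{(r+2)/2})=\dom({\bf A}_r)$ and ${\bf A}_r v={\bf A}v=\la v$. (For integer $r$ this step is purely algebraic, since ${\bf A}^{n}v=\la^{n}v$ follows by induction; only non-integer exponents genuinely require fractional powers.) Conversely, if $w\in\dom({\bf A}_r)$ satisfies ${\bf A}_r w=\la w$, then $w\in\dom({\bf A}_r)\subseteq\dom({\bf A})$ and ${\bf A}w={\bf A}_r w=\la w$, giving $\Ker({\bf A}_r-\la I)\subseteq\Ker({\bf A}-\la I)$. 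Hence the kernels agree and the multiplicities coincide.

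The essential content is simply that ${\bf A}_r$ is a restriction of ${\bf A}$, so no eigenvector can be created or destroyed; everything else is bookkeeping. The point I expect to require the most care -- and the reason this does not drop out of Theorem \ref{t-leftdefortho} -- is the absence of any discreteness hypothesis: that theorem presupposes a complete orthogonal eigenbasis, whereas here $\la$ may be embedded in continuous spectrum, so one cannot diagonalize globally and must instead localize at $\la$ through the functional calculus, as above. The only place this matters is the forward inclusion $v\in\dom({\bf A}^{(r+2)/2})$; should one wish to remain strictly within the left-definite axioms and avoid the spectral theorem, this is the single step for which an alternative argument would have to be supplied.
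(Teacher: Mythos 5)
The paper offers no proof of this statement: it is posed there as an open conjecture, with only computational evidence (the Jacobi operator) suggested in the surrounding discussion. So there is nothing to compare your argument against; it has to stand on its own. Judged that way, it appears correct and complete, and it settles the conjecture as stated. The two pillars are exactly the right ones: (i) by the definition of the left-definite operator together with Corollary \ref{t-comppower}, ${\bf A}_r$ is the restriction of ${\bf A}$ to $\dom({\bf A}^{(r+2)/2})\subseteq\dom({\bf A})$, so $\Ker({\bf A}_r-\la I)\subseteq\Ker({\bf A}-\la I)$ is automatic; and (ii) an eigenvector $v$ of a self-adjoint operator has scalar spectral measure $\langle E(\cdot)v,v\rangle$ concentrated at the single point $\{\la\}$, hence lies in $\dom(f({\bf A}))$ for every Borel function $f$ finite at $\la$, in particular in $\dom({\bf A}^{(r+2)/2})=\dom({\bf A}_r)$, which gives the reverse inclusion. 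Since the two eigenspaces coincide as sets, and multiplicity for a self-adjoint operator is the Hilbert dimension of the eigenspace (unambiguous under the paper's standing separability assumption even when $m=\infty$), the multiplicities agree. Your reduction to the strictly positive case by shifting matches how the paper itself extends left-definite theory to merely semi-bounded operators, and the shift clearly fixes eigenvectors. This is, in effect, the natural strengthening of Theorem \ref{t-leftdefortho}: the spectral projection $E(\{\la\})$ replaces the global orthogonal eigenbasis, which is precisely what lets you handle an eigenvalue embedded in continuous spectrum.

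Two minor points. Your parenthetical that the domain membership is ``purely algebraic for integer $r$'' is accurate only for even $r$; for odd $r$ the exponent $(r+2)/2$ is a half-integer, so the spectral-calculus step is still needed (it covers all $r>0$ anyway, so nothing is lost). And it is worth flagging that the argument says nothing about spectral multiplicity on the continuous spectrum in the direct-integral sense; the conjecture as written concerns genuine eigenvalues only, and for those your identification of the kernels is all that is required.
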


The difficulty of determining multiplicity of eigenvalues restricts the amount of evidence available to support the conjecture. In the case when ${\bf A}$ is the Jacobi differential operator, left-definite operators and Weyl $m$-functions for their extensions can be found in \cite{F} and the spectral analysis of \cite{BFL} could be applied to potentially verify the conjecture for the example. 

Conjecture \ref{c-multiplicity} also has implications for the intertwining of eigenvalues between distinct extensions. Usually this intertwining takes place between the Friedrichs extension and a transversal extension, which can be seen e.g.~in a comparison of Neumann and Dirichlet eigenvalues for Sturm--Liouville operators with two regular endpoints. Essentially, it would be interesting to see if eigenvalue intertwining between two self-adjoint extensions implied that the $r$-th left-definite operators of the two extensions also had eigenvalue intertwining.

\section{Comparison with BKV Semi-Bounded Form Theory}\label{s-BKV}

We first introduce the basics of the so-called Birman--Krein--Vishik (BKV) theory of semi-bounded forms. A collection of results from the theory along with original references can be found in e.g.~\cite{AS}. The brief presentation here mostly follows that of \cite[Chapter 5]{BdS} and \cite[Chapter 6]{Kato}, which can also be consulted for more details.

Let ${\bf A}$ be a semi-bounded self-adjoint operator with lower bound $m({\bf A})<\infty$. There is a natural way to identify ${\bf A}$ with a closed semi-bounded form $\ft$ in $\cH$ with the same lower bound $m(\ft)=m({\bf A})$ via the First and Second Representation Theorems, see e.g.~\cite[Theorem 5.1.18]{BdS} and \cite[Theorem 5.1.23]{BdS}. Namely, let $\f\in\dom ({\bf A})$, $\psi\in\dom(\ft)$, $\gamma<m({\bf A})$ and define
\begin{align*}
    \dom(\ft_{{\bf A}})&=\dom({\bf A}-\gamma)^{1/2}, \\
    \ft_{{\bf A}}[\f,\psi]&=\langle ({\bf A}-\gamma)\f,\psi\rangle+\gamma\langle\f,\psi\rangle.
\end{align*}
Equivalently, if $\f,\psi\in\dom(\ft_{{\bf A}})$, then
\begin{align*}
    \ft_{{\bf A}}[\f,\psi]=\langle ({\bf A}-\gamma)^{1/2}\f,({\bf A}-\gamma)^{1/2}\psi\rangle +\gamma\langle\f,\psi\rangle.
\end{align*}
The space $\dom(\ft_{{\bf A}})$ endowed with the inner product
\begin{align}\label{e-forminnerprod}
    \langle\f,\psi\rangle\ci{\ft_{{\bf A}-\gamma}}:=\ft_{{\bf A}}[\f,\psi]-\gamma\langle\f,\psi\rangle, \text{ for }\f,\psi\in\dom(\ft),
\end{align}
is a Hilbert space, denoted $\cH\ci{\ft_{{\bf A}-\gamma}}$.

More general semi-bounded symmetric operators $S$ (i.e.~the minimal operator of a Sturm--Liouville expression) also have a form associated with them via 
\begin{align}\label{e-semiformgen}
    \ft_S[f,g]=\langle Sf,g\rangle, \text{ for }f,g\in\dom(S).
\end{align}
The semi-bounded self-adjoint operator ${\bf S}_F$ associated with the closure of the form $\ft_S$ in equation \eqref{e-semiformgen} is called the {\em Friedrichs extension} of $S$, see \cite[Definition 5.3.2]{BdS}.

This theory is often used to distinguish or construct specific self-adjoint extensions from a symmetric operator, as it is often more convenient to define a closed semi-bounded form than a self-adjoint operator. An ordering of closed semi-bounded forms then corresponds to an ordering of self-adjoint extensions, is also useful for this purpose. Details can be found in e.g.~\cite[Section 5.2]{BdS} and \cite[Remark 3.5]{BFL}.

It is also possible to put densely defined, closed, sectorial forms $\ft_T$ into one-to-one correspondence with $m$-sectorial operators $T$, but this falls outside the scope of the current manuscript (see e.g.~\cite{Arl, BT, C}). However, this shows that the correspondence between forms and operators is usable in a wide variety of contexts. 

In comparison with Section \ref{s-LD}, it should be clear that given a semi-bounded self-adjoint operator ${\bf A}$ the first left-definite space automatically coincides with the domain of the associated closed semi-bounded form after an appropriate shift to make the operator positive, i.e.~$\cV_1=\dom(\ft_{\bf A})$. Indeed, for $f,g\in\cV_1$ the action of $\ft_{\bf A}[f,g]$ is equal to $\langle f,g\rangle_1$, the inner product in the first left-definite space. 

If we restrict our attention to closed semi-bounded forms, left-definite theory is a natural extension of BKV theory; instead of associating a single closed semi-bounded form with a self-adjoint operator, it is possible to associate a whole continuum of closed semi-bounded forms. 

\begin{proposition}\label{p-new}
Let ${\bf A}$ be a semi-bounded self-adjoint operator with lower bound $m({\bf A})$. For each $r\in\NN$, the form (we suppress the dependence on ${\bf S}$ here), with $f,g\in\dom(\ft_r)$ and $\gamma<m({\bf A})$, given by
\begin{align*}
    \dom(\ft_{r})&=\dom({\bf A}-\gamma)^{r/2}, \\
    \ft_r[f,g]&=\langle(({\bf A}-\gamma)^{r/2}f,({\bf A}-\gamma)^{r/2}g\rangle+\gamma\langle f,g\rangle,
\end{align*}
is closed and semi-bounded. 
\end{proposition}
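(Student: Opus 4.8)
The plan is to recognize $\ft_r$ as precisely the BKV form associated (via the First and Second Representation Theorems recalled at the start of this section) with a single semi-bounded self-adjoint operator, and then to invoke the general fact, already used in this section, that every such form is closed and semi-bounded. First I would set $\mathbf{B} := {\bf A} - \gamma$. Since $\gamma < m({\bf A})$, the operator $\mathbf{B}$ is self-adjoint and bounded below by $\delta := m({\bf A}) - \gamma > 0$, so its spectrum lies in $[\delta,\infty)$ and the functional calculus produces a positive self-adjoint operator $\mathbf{B}^{r/2}$ with $\dom(\mathbf{B}^{r/2}) = \dom(({\bf A}-\gamma)^{r/2})$. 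I would then introduce the semi-bounded self-adjoint operator
\[
\mathbf{C} := \mathbf{B}^r + \gamma = ({\bf A}-\gamma)^r + \gamma,
\]
which is again a function of ${\bf A}$ by the spectral theorem and is bounded below by $\delta^r + \gamma$. Because $\delta^r > 0$, we have $\gamma < \delta^r + \gamma = m(\mathbf{C})$, so $\gamma$ is an admissible shift for $\mathbf{C}$ in the sense of the construction at the beginning of Section \ref{s-BKV}.

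The heart of the argument is the identity $\ft_r = \ft_{\mathbf{C}}$. By uniqueness of the positive square root, $(\mathbf{C}-\gamma)^{1/2} = (\mathbf{B}^r)^{1/2} = \mathbf{B}^{r/2}$, so in particular $\dom((\mathbf{C}-\gamma)^{1/2}) = \dom(\mathbf{B}^{r/2}) = \dom(\ft_r)$. Substituting into the definition of $\ft_{\mathbf{C}}$ from the start of this section gives
\[
\ft_{\mathbf{C}}[f,g] = \langle (\mathbf{C}-\gamma)^{1/2}f, (\mathbf{C}-\gamma)^{1/2}g\rangle + \gamma\langle f,g\rangle = \langle \mathbf{B}^{r/2}f, \mathbf{B}^{r/2}g\rangle + \gamma\langle f,g\rangle = \ft_r[f,g].
\]
Hence $\ft_r$ coincides with the closed semi-bounded form attached to $\mathbf{C}$, and closedness follows at once. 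For the explicit lower bound I would note that the spectral theorem yields $\|\mathbf{B}^{r/2}f\|^2 \geq \delta^r \|f\|^2$ for every $f \in \dom(\mathbf{B}^{r/2})$, whence $\ft_r[f,f] \geq (\delta^r + \gamma)\|f\|^2$, giving semi-boundedness with lower bound $(m({\bf A})-\gamma)^r + \gamma$.

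The only genuinely delicate points are bookkeeping rather than conceptual. One must verify the functional-calculus identity $(\mathbf{B}^r)^{1/2} = \mathbf{B}^{r/2}$ together with the matching of domains $\dom((\mathbf{C}-\gamma)^{1/2}) = \dom(({\bf A}-\gamma)^{r/2})$ — this is where uniqueness of positive square roots and the fact that all these functions of ${\bf A}$ share a common spectral resolution are used — and one must confirm that $\gamma$ remains strictly below $m(\mathbf{C})$ so that the BKV construction applies verbatim. I expect the domain identification to be the main place care is needed, since it is what guarantees that $\ft_r$ is the form of $\mathbf{C}$ exactly, rather than merely a restriction or extension of it; everything else reduces to the strict positivity $\delta^r > 0$ and to the representation theorems already cited in this section. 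A final remark worth including is that the argument goes through verbatim for real $r > 0$, not only $r \in \NN$, since $\mathbf{B}^{r/2}$ is defined for all such $r$ by the same functional calculus.
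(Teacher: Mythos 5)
Your argument is correct, but it follows a genuinely different route from the paper's. The paper stays inside left-definite theory: semi-boundedness of $\ft_r$ is read off from item (4) following Definition \ref{t-ldinpro} applied to the shifted operator ${\bf A}-\gamma$, and closedness is obtained by combining the completeness of the $r$-th left-definite space $\cH_r$ (part of \cite[Theorem 3.1]{LW02}) with the criterion from \cite[Lemma 5.1.9]{BdS} that a semi-bounded form is closed precisely when its associated form space is a Hilbert space. You instead reduce everything to the $r=1$ construction at the head of Section \ref{s-BKV} by exhibiting $\ft_r$ as the representing form of the single auxiliary semi-bounded self-adjoint operator $\mathbf{C}=({\bf A}-\gamma)^r+\gamma$, via the functional-calculus identity $(\mathbf{C}-\gamma)^{1/2}=\bigl(({\bf A}-\gamma)^r\bigr)^{1/2}=({\bf A}-\gamma)^{r/2}$, and then invoke the First and Second Representation Theorems; your checks that the domains match and that $\gamma<m(\mathbf{C})$ are exactly the points that need care, and both hold. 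What your route buys is self-containedness (no appeal to left-definite theory at all), an explicit lower bound $(m({\bf A})-\gamma)^r+\gamma$, and validity for every real $r>0$ rather than only $r\in\NN$. What the paper's route buys --- and the reason the proposition is phrased as it is --- is that it displays the continuum of closed forms as arising directly from the left-definite scale $\cH_r$, which is precisely the bridge between left-definite theory and BKV theory that the section is meant to highlight; your proof, though arguably cleaner as a standalone argument, conceals that link.
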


\begin{proof}
Here, we consider the left-definite spaces associated with the shifted operator ${\bf A}-\gamma$. Semi-boundedness of $\ft_r$ is assured by item (4) after Definition \ref{t-ldinpro}. Lemma 5.1.9 of \cite{BdS} says that the semi-bounded form $\ft_r$ is closed if and only if the space $\cH\ci{({\bf A}-\gamma)^r}$ is in fact a Hilbert space. Endow the space $\dom(\ft_r)$ with the inner product $\langle\cdot,\cdot\rangle_r$ from Definition \ref{t-ldinpro} and notice that this coincides with the definition of the space $\cH\ci{({\bf A}-\gamma)^r}$ above. Since $\cH_r$ is a Hilbert space, this immediately implies that $\ft_r$ is closed. 
\end{proof}

While the BKV theory of semi-bounded forms gives valuable knowledge about self-adjoint extensions, in practice the domain of the operator ${\bf A}^{1/2}$ is often difficult to determine, even for elementary choices of ${\bf A}$. Explicit domains can be determined when ${\bf S}$ is a Sturm--Liouville operator, see \cite[Section 6.9]{BdS}. Left-definite domains have some advantages and disadvantages in this area. 

As far as advantages go, for even $r$, left-definite domains do not involve fractional powers of operators and therefore are somewhat natural to consider. The square of a self-adjoint operator is sometimes useful in applications and falls into this category. There is also some spectral stability inherent in left-definite operators, see Theorem \ref{t-leftdefortho}, that is, to the best knowledge of the authors, not available in BKV form theory. Left-definite theory thus provides alternatives to the classical form used in BKV theory and these form domains can be expressed with classical boundary conditions, see e.g.~Theorem \ref{t-leftdefdomains}.

The main disadvantage of the theory is that it is unclear whether the ordering of closed semi-bounded forms is preserved under left-definite theory. This is made more difficult by the fact that explicit boundary conditions are somewhat elusive, see Theorem \ref{t-leftdefdomains} for an example where they were determined. A proof of Conjecture \ref{Friedrichs} would go a long way towards solving this discrepancy. BKV theory, as previously mentioned, is also applicable to a wider range of operators.

It is also important to note that, in contrast to Section \ref{s-LD}, the discussion here was not concerned with differential operators but holds in the wider context of semi-bounded self-adjoint operators.

BKV theory was exploited in \cite{BFL} to build a boundary pair for Sturm--Liouville operators with limit-circle endpoints that was compatible with a boundary triple. This allows the set up of a perturbation problem that describes all possible self-adjoint extensions of the minimal operator by using a scale of spaces in the following section.

\section{Scale of Spaces from Singular Perturbation Theory}\label{s-SCALE}

Let ${\bf A}$ and ${\bf T}$ be operators on a Hilbert space $\cH$. In perturbation theory, we are interested in the following question: If we know the properties of operator ${\bf A}$ well, what can we say about the formal operator ${\bf A}+{\bf T}$?

When the Hilbert space $\cH$ is infinite, an immediate question is the rigorous definition what the sum of two operators ${\bf A}+{\bf T}$. To briefly illustrate the severity of potential problems, recall that, by the closed graph theorem, we know that the unbounded operators ${\bf A}$ and ${\bf T}$ are only defined on a dense subset of $\cH$. Noticing this, we realize that it can easily happen that the intersection of their domains is empty. In this case, ${\bf A}+{\bf T}$ would not be interesting as its domain equals the empty set. Of course, less severe scenarios can also lead to serious issues with the meaning of ${\bf A}+{\bf T}$.

Throughout perturbation theory, this problem is dealt with by making situation or application dependent assumptions on ${\bf A}$ and/or ${\bf T}$. Most frequently, some smallness hypothesis on the perturbation ${\bf T}$ is imposed. In our application to Sturm--Liouville operators, both ${\bf A}$ and ${\bf T}$ will be unbounded self-adjoint operators (making their sum self-adjoint as well), though ${\bf A}$ will be bounded from below and ${\bf T}$ will be of finite rank, i.e.~has finite dimensional range. The fact that ${\bf T}$ is of finite rank, will allows us to formulate this perturbation problem rather concretely. With this setup, the operator ${\bf A}$ will be one self-adjoint extensions of a Sturm--Liouville operator and the perturbed operators ${\bf A}+{\bf T}$ will stand in bijection to all possible self-adjoint extensions of the minimal operator with ${\bf T}$ encoding those boundary conditions.

Further, in our application the following key property holds: the range of ${\bf T}$ is contained in a Hilbert space generated by operator ${\bf A}$.
It will turn out that ${\bf T}$ is relatively bounded with respect to ${\bf A}$, so that the domain of ${\bf A}+{\bf T}$ equals that of ${\bf A}$.
To carry out this plan, we now define this scale of Hilbert spaces and then discuss the meaning of ${\bf A}+{\bf T}$ in our situation.

The following definition of these finite rank singular form bounded perturbations roughly follows that in \cite{AK}. Let ${\bf A}$ be a self-adjoint operator on $\cH$.
Consider the non-negative operator $|{\bf A}|=({\bf A}^*{\bf A})^{1/2}$, whose domain coincides with the domain of ${\bf A}$. We introduce a scale of Hilbert spaces.

\begin{definition}[\hspace{-1pt}{see, e.g.~\cite[Section 1.2.2]{AK}}]\label{d-standardscale}
For $s\geq 0$, define the space $\cH_s(A)$ to consist of $\f\in\cH$ for which the $s$-norm
\begin{align}\label{d-scalenorm}
\|\f\|_s:=\|(|{\bf A}|+I)^{s/2}\f\|_{\cH},
\end{align}
is bounded. 
The space $\cH_s({\bf A})$ equipped with the norm $\|\cdot\|_s$ is complete. The adjoint spaces, formed by taking the linear bounded functionals on $\cH_s({\bf A})$, are used to define these spaces for negative indices, i.e.~$\cH_{-s}({\bf A}):=\cH_s^*({\bf A})$. The corresponding norm in the space $\cH_ {-s}({\bf A})$ is thus defined by \eqref{d-scalenorm} as well. 
The collection of these $\cH_s({\bf A})$ spaces will be called the \emph{scale of Hilbert spaces associated with the self-adjoint operator ${\bf A}$}.
\end{definition}

Alternatively, if ${\bf A}$ is semi-bounded with lower bound $m({\bf A})$, then we can choose to consider ${\bf A}-\gamma$ for $\gamma<m({\bf A})$ instead of $|{\bf A}|+I$. In particular, both options generate the same spaces with equivalent norms.

It is not difficult to see that the spaces satisfy the nesting properties
\begin{align*}
\hdots\subset\cH_2({\bf A})\subset\cH_1({\bf A})\subset\cH=\cH_0({\bf A})\subset\cH_{-1}({\bf A})\subset\cH_{-2}({\bf A})\subset\hdots,
\end{align*}
and that for every two $s,t$ with $s<t$, the space $\cH_t({\bf A})$ is dense in $\cH_s({\bf A})$ with respect to the norm $\|\cdot\|_s$. Indeed, the operator $({\bf A}+I)^{t/2}$ defines an isometry from $\cH_s({\bf A})$ to $\cH_{s-t}({\bf A})$. For $\f\in\cH_{-s}({\bf A})$, $\psi\in\cH_s({\bf A})$, we define the duality pairing
\begin{align*}
\langle\f,\psi\rangle_{s,-s}:=\big\langle(|{\bf A}|+I)^{-s/2}\f,(|{\bf A}|+I)^{s/2}\psi\big\rangle.
\end{align*}

The main perspective we are choosing here is the relations of these Hilbert scales with those spaces generated by left-definite theory, see Section \ref{s-LD}. Outside of this perspective we mention on the side that throughout the literature of other fields similar constructions occur under different names. For instance, the pairing of $\cH_1({\bf A})$, $\cH$, and $\cH_{-1}({\bf A})$ is sometimes referred to as a \emph{Gelfand triple} or \emph{rigged Hilbert space}. Also, when $\bA$ is the derivative operator, these scales are simply Hilbert--Sobolev spaces. When ${\bf A}$ is a general  differential operator, they are closely related. More details about Hilbert scales can be found in \cite{KP}.

Aside, we also mention that finite-rank perturbations of a given operator $\bA$ arise most commonly when the vectors $\f$ are bounded linear functionals on the domain of the operator ${\bf A}$; so, many applications are focused on $\cH_{-2}({\bf A})$. Here, we  discuss the case $\f\in\cH_{-1}({\bf A})$ for the sake of simplicity, the so-called form bounded singular case. However, \cite{AK} contains information on extensions to $\f\in\cH_{-2}({\bf A})$, and the case when $\f\notin\cH_{-2}({\bf A})$ can be found in \cite{DKS, Kurasov}.

To define rank-$d$ form bounded perturbations of a self-adjoint operator $\bA$ on a Hilbert space $\cH$, consider a coordinate mapping ${\bf B}:\CC^d\to\Ran({\bf B})\subset\cH_{-1}\left({\bf A}\right)$ that acts via multiplication by the row vector
\begin{align*}
    \begin{pmatrix}
    f_1, ~~\hdots, f_d\\
    \end{pmatrix}
    \qquad\text{with } f_1, \hdots, f_d\in \cH_{-1}\left({\bf A}\right).
\end{align*}
Formally, the mapping ${\bf B}^*:\Ran({\bf B})\to\CC^d$ acts by
\begin{align*}
    {\bf B}^*\fdot=
    \begin{pmatrix}
    \langle \fdot,f_1\rangle_{s,-s}\\
    \vdots\\
    \langle \fdot,f_d\rangle_{s,-s}
    \end{pmatrix}.
\end{align*}
We say \emph{formally}, because the inner products occurring in ${\bf B}^*$ are not defined on all of $\Ran({\bf B})$. However, in accordance with the definition of $\cH_{-1}\left({\bf A}\right)$, they do make sense as a duality pairing on the quadratic form space of the unperturbed operator $\bA$. And that is all we need. Abusing notation slightly, we use the same notation ${\bf B}^*$ for the operator restricted to this form domain.

The quadratic form sense now gives rigorous meaning to the finite rank form bounded singular perturbation
\begin{align}\label{e-definition}
    \bA\ci\Theta:=\bA+{\bf B}\Theta{\bf B}^*,
\end{align}
where $\Theta:\C^d\to\C^d$ is an $d\times d$ matrix (not a linear relation).

For the interpretation and application it is easiest to fix a coordinate map ${\bf B}$. The definition in equation \eqref{e-definition} can be extended to linear relations  $\Theta$. It is well-known that boundary conditions then stand in bijection to the self-adjoint linear relation $\Theta$. One way to access the explicit translation between boundary conditions and $\Theta$ is via boundary triplets. We decided not to include this information due to accessibility, see e.g.~\cite{BFL}. More information on finite-rank perturbations can be found in e.g.~\cite{FL2, LT_JST, LTSurvey}.

Given this setup, the main problem usually becomes determining which space a desired perturbation vector comes from. In practice, it can be very difficult to actually compute the norm in Definition \ref{d-standardscale} so it does not appear there are many tools available for this purpose. However, connections with left-definite theory and BKV semi-bounded form theory can be exploited here.

For a semi-bounded self-adjoint operator ${\bf A}$ with lower bound $m({\bf A})$, choose $\gamma<m({\bf A})$. Given the note after Definition \ref{d-standardscale}, it is then clear that $\cH_{1}({\bf A}-\gamma)$ coincides with $\cH\ci{{\bf A}-\gamma}$ (inner product given in equation \eqref{e-forminnerprod}) so that the underlying spaces are the same. Hence, showing that a perturbation vector belongs to the class $\cH_{-1}({\bf A}-\gamma)$ is the same as showing it is a linear bounded functional on the first left-definite space $\cV_1$ or the form domain $\ft_{{\bf A}-\gamma}$.

Here we find another limitation of BKV semi-bounded form theory, as it is somewhat natural to consider what spaces are analogously related for more singular perturbations. Fortunately, the connection with left-definite theory remains valid in these cases and provides good indicators for what spaces perturbation vectors might be in. For instance, a perturbation vector belongs to the class $\cH_{-2}({\bf A}-\gamma)$ if and only if it is a linear bounded functional on $\cH_{2}({\bf A}-\gamma)$, whose underlying space is simply the second left-space $\cV_2$, and so on and so forth for higher values of $s$ in accordance with Corollary \ref{t-comppower}. Left-definite theory is thus a fundamental subject in the study of singular perturbation theory. 

Singular perturbation theory is thus complementary to left-definite theory. It provides a rigorous analysis of linear bounded functionals on left-definite spaces, thereby expanding the framework to consider $-r\in\NN$, and many useful applications. However, because singular perturbation theory is for general self-adjoint operators, it lacks the key spectral stability results of Theorem \ref{t-leftdefortho} and explicit descriptions of the domains involved. If the self-adjoint operator is strictly positive, both theories can be applied directly.

\section{Perturbation setup}\label{s-PERT}

The connection between BKV semi-bounded form theory and singular perturbation theory was recently used by the authors in \cite{BFL} to obtain a characterization of all possible self-adjoint extensions of Sturm--Liouville differential operators with one or two limit-circle endpoints. As explained in the previous two subsections, the connection these theories have with left-definite theory mean that the left-definite space $\cV_1$ is also inherently involved. 

The perturbation setup itself is a bit technical, so we refer the reader to the full manuscript \cite{BFL} for details and cover only the broad strokes here. 

The operators ${\bf B}$ and ${\bf B}^*$ are as in Section \ref{s-PERT} with the perturbation vectors $f_1$ and $f_2$ chosen to have their duality pairing act like the sesquilinear form for the Sturm--Liouville operator with one input being the principal solution. Namely, $f_1$ and $f_2$ are elements from $\cH_{-1}({\bf A})$ defined so that
\begin{align}\label{e-deltaexplain}
    \langle\fdot,f_1\rangle_{1,-1}:=[\fdot,u_a](x)\Big|_{x=a} \quad\text{and}\quad
    \langle\fdot,f_2\rangle_{1,-1}:=[\fdot,u_b](x)\Big|_{x=b},
\end{align}
where $u_a$ and $u_b$ are principal solutions to the eigenvalue problem for some $\la\in\RR$ near the endpoints $x=a$ and $x=b$ of the differential equation, respectively. The self-adjoint extension ${\bf A}_0$ of the minimal operator is chosen to be transversal to the domain of the Friedrichs extension and defined via a boundary triple. Namely, if ${\bf A}_F$ is the Friedrichs extension, then the span of $\dom({\bf A}_F)\cup\dom({\bf A}_0)$ is the maximal domain and $\dom({\bf A}_F)\cap\dom({\bf A}_0)$ is the minimal domain.

\begin{theorem}[{\cite[Theorem 3.6]{BFL}}]\label{t-twopert}
Let $\Theta$ be a self-adjoint linear relation in $\CC^2$. Define ${\bf A}\ci\Theta$ as the singular rank-two perturbation:
\begin{align}\label{e-twopert}
    {\bf A}\ci\Theta:={\bf A}_0+{\bf B}\Theta{\bf B}^*.
\end{align}
Then every self-adjoint extension of the minimal operator $L\ti{min}$ can be written as ${\bf A}\ci\Theta$ for some $\Theta$.
\end{theorem}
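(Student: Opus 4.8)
The plan is to identify the perturbation-theoretic parametrization $\Theta\mapsto{\bf A}\ci\Theta$ with the abstract boundary-triple parametrization of the self-adjoint extensions of $L\ti{min}$, and then to read off surjectivity. Both families are indexed by a self-adjoint linear relation in the \emph{same} two-dimensional space $\CC^2$—this dimension being forced by the deficiency indices $(2,2)$ arising from the two limit-circle endpoints—so once the two maps are shown to have the same range, the fact that $\Theta$ exhausts all self-adjoint linear relations immediately yields every self-adjoint extension.

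First I would fix a boundary triple $(\CC^2,\Gamma_0,\Gamma_1)$ for $(L\ti{min},L\ti{max})$ and normalize it using the data in the setup. The transversality of ${\bf A}_0$ to the Friedrichs extension ${\bf A}_F$ permits choosing $\Gamma_0,\Gamma_1$ so that ${\bf A}_0=\Ker\Gamma_0$, with ${\bf A}_F$ corresponding to the purely multivalued relation. With this normalization, the GKN/boundary-triple classification states that every self-adjoint extension equals $A_\Theta:=\{f\in\dom(L\ti{max}):(\Gamma_0 f,\Gamma_1 f)\in\Theta\}$ for a unique self-adjoint linear relation $\Theta$, and conversely each such $\Theta$ yields a self-adjoint extension.

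Next I would compute ${\bf B}^*$ explicitly on the maximal domain. The defining relations \eqref{e-deltaexplain} give, for $f\in\dom(L\ti{max})$, the identity ${\bf B}^*f=([f,u_a](a),\,[f,u_b](b))^{\top}$, so ${\bf B}^*$ reproduces exactly the boundary functionals carried by the principal solutions $u_a,u_b$. The key step is to check that these two functionals coincide, up to the chosen normalization, with the entries of $\Gamma_1$ (the boundary map transversal to the Friedrichs condition). Granting this, adding the singular term ${\bf B}\Theta{\bf B}^*$—interpreted in the quadratic-form sense on $\cH_{-1}({\bf A}_0)=\cV_1$ as explained after Definition \ref{d-standardscale}—has the effect of relaxing the defining condition $\Gamma_0 f=0$ of ${\bf A}_0$ to $(\Gamma_0 f,\Gamma_1 f)\in\Theta$. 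I would make this precise by invoking the Krein-type resolvent formula for singular rank-$d$ perturbations from \cite{AK}: the resolvent of ${\bf A}\ci\Theta$ differs from that of ${\bf A}_0$ by a rank-two term governed by $\Theta$, and comparing it with the Krein resolvent formula attached to the boundary triple identifies ${\bf A}\ci\Theta$ with the boundary-triple extension $A_{\Psi(\Theta)}$, where $\Psi$ is a fixed fractional-linear (inversion-type) transformation of the self-adjoint linear relations in $\CC^2$; in particular $\Theta=0$ recovers ${\bf A}_0$.

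Self-adjointness of ${\bf A}\ci\Theta$ for self-adjoint $\Theta$ then follows from this identification (or directly from the symmetry of the form), and surjectivity onto all self-adjoint extensions is immediate: since $\Psi$ is a bijection of the set of self-adjoint relations onto itself, and that set already parametrizes every extension, ${\bf A}\ci\Theta$ exhausts them as $\Theta$ varies. I expect the main obstacle to be the rigorous handling of the singular term: because $f_1,f_2$ belong to $\cH_{-1}({\bf A}_0)$ rather than to $\cH$, the sum in \eqref{e-twopert} is only formal, and one must work through the form domain $\cV_1$ and the regularized resolvent formula both to give ${\bf A}\ci\Theta$ meaning and to match its parameter with the boundary-triple parameter. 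The most delicate point is the multivalued part of $\Theta$—needed to recover the endpoints ${\bf A}_0$ and ${\bf A}_F$ of the family themselves—which is exactly where permitting $\Theta$ to be a linear relation rather than a matrix becomes indispensable.
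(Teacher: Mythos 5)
Your proposal follows essentially the same route the paper sketches for \cite[Theorem 3.6]{BFL}: rigorous meaning for the singular term via the boundary pair and the form domain (i.e.\ showing that $f_1,f_2$ are bounded functionals on $\cH_{1}({\bf A}_0)$, hence elements of $\cH_{-1}({\bf A}_0)$), and exhaustion of all self-adjoint extensions via the boundary-triple/Krein-resolvent correspondence with self-adjoint linear relations $\Theta$, multivalued part included. The only slip is the identification ``$\cH_{-1}({\bf A}_0)=\cV_1$'': it is $\cH_{1}({\bf A}_0)$ that coincides with the first left-definite space $\cV_1$, while the perturbation vectors live in its dual $\cH_{-1}({\bf A}_0)$.
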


In an application, the operator ${\bf A}\ci{\Theta}$ being well-defined reduces to showing that the somewhat abstractly defined $f_1,f_2$ belong to $\cH_{-1}({\bf A}_0)$. It turns out that this is essentially a consequence of constructing a boundary pair. In this context, a boundary pair consists of
\begin{itemize}
    \item a bounded operator from $\cH_{1}({\bf A}_0)$ to $\CC^2$ that is surjective and whose kernel is equal to the domain of the Friedrichs extension,
    \item and the space $\CC^2$. 
\end{itemize}
Such a map from $\cH_{1}({\bf A}_0)$ to $\CC^2$ is then a linear bounded functional on $\cH_{1}({\bf A}_0)$ and hence generated by a function from $\cH_{-1}({\bf A}_0)$ via the duality pairing. Indeed, the proof uses this fact, as $f_1$ and $f_2$ are chosen to naturally fit this requirement. 
The other part of the theorem -- that equation \eqref{e-twopert} establishes a one-to-one relationship between self-adjoint extensions and self-adjoint linear relations $\Theta$ -- relies on a parameterization stemming from the theory of boundary triples. Note that the use of linear relations here is in contrast to the matrices used in Section \ref{s-PERT}.

We also note that the perturbation from Theorem \ref{t-twopert} can only be rigorously interpreted by appealing to BKV semi-bounded form theory, see \cite[Remark 3.7]{BFL} for details.

Ideally, the perturbation setup could be adapted to work for powers of Sturm--Liouville operators. This might provide a nice class of examples of finite rank perturbations. The following conjecture states what a formulation would look like.

\begin{conjecture}\label{c-powers}
Let ${\bf L}^n_0$, $n\in\NN$, be a self-adjoint extension of the minimal operator $L^n\ti{min}$ on $L^2[(a,b),w]$, which has deficiency indices $(m,m)$ with $m\in\{n,2n\}$. Let ${\bf L}^n$ operate on its domain via $\ell^n[\fdot]$ (defined in equation \eqref{e-lagrangian}), a semi-bounded Sturm--Liouville differential expression of order $2n$ generated by composing the Sturm--Liouville operator with itself $n$ times. Let $\Theta$ be a self-adjoint relation in $\CC^m$. Then there exist a choice of ${\bf L}^n_0$, ${\bf B}$ and ${\bf B}^*$ such that the singular rank-$m$ perturbation
\begin{align*}
    {\bf L}\ci\Theta:={\bf L}^n_0+{\bf B}\Theta{\bf B}^*.
\end{align*}
is well-defined and every self-adjoint extension of the minimal operator $L\ti{min}$ can be written as ${\bf L}\ci\Theta$ for some $\Theta$.
\end{conjecture}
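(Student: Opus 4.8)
The plan is to lift the rank-two construction behind Theorem~\ref{t-twopert} to the order-$2n$ setting by treating $\ell^n[\fdot]$ itself as the base expression and reproducing the three ingredients of \cite{BFL}: a transversal base extension, perturbation vectors built from principal solutions, and a boundary-triple parameterization. First I would fix the base self-adjoint extension ${\bf L}^n_0$ to be transversal to the Friedrichs extension ${\bf L}^n_F$ of $L^n\ti{min}$, so that $\dom({\bf L}^n_F)\cap\dom({\bf L}^n_0)$ is the minimal domain and the span of the two domains is $\cD\ti{max}^n$; this is the exact analogue of the $n=1$ choice and guarantees that the perturbation is measured against the ``soft'' extension. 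The two cases $m=n$ and $m=2n$ correspond, as in the remark following Theorem~\ref{t-leftdefdomains}, to one versus two limit-circle endpoints, and I would treat them uniformly by distributing the $m$ boundary functionals among the singular endpoints, $n$ at each limit-circle endpoint.

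Next I would construct the perturbation vectors $f_1,\dots,f_m\in\cH_{-1}({\bf L}^n_0)$ generalizing \eqref{e-deltaexplain}. Green's formula \eqref{e-greens} for the order-$2n$ expression produces a sesquilinear boundary form $[\fdot,\fdot](x)\big|_a^b$ carrying boundary data up to order $2n-1$; at each limit-circle endpoint $c\in\{a,b\}$ I would select $n$ principal solutions $u^{(1)}_c,\dots,u^{(n)}_c$ of the eigenvalue equation for $\ell^n$ at a fixed real $\la$ below the spectrum and define the corresponding functionals by $\langle\fdot,f_j\rangle_{1,-1}:=[\fdot,u^{(j)}_c](x)\big|_{x=c}$. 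The role of principal solutions is that they pair trivially with the Friedrichs domain, so the kernel of the resulting map $\cH_1({\bf L}^n_0)\to\CC^m$ is exactly $\dom(({\bf L}^n)_F)$; by Conjecture~\ref{Friedrichs} (verified for Jacobi) this Friedrichs domain is in turn the $2n$-th left-definite space, which is precisely what ties the construction back to left-definite theory.

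With the functionals in hand, I would verify that each $f_j$ genuinely lies in $\cH_{-1}({\bf L}^n_0)$ by exhibiting the boundary map $f\mapsto([f,u^{(j)}_c](x)\big|_{x=c})_{j=1}^m$ as a bounded surjective operator on the first scale space $\cH_1({\bf L}^n_0)=\cV_1$, i.e.~as a boundary pair in the sense used after Theorem~\ref{t-twopert}. Boundedness on $\cV_1$ is exactly the statement that the boundary form extends continuously to the form domain, and then each $f_j$ is recovered as the Riesz representative of a bounded linear functional on $\cV_1$. The bijection between self-adjoint relations $\Theta$ in $\CC^m$ and self-adjoint extensions of $L^n\ti{min}$ would then follow from the boundary-triple parameterization attached to $\ell^n$ through \eqref{e-greens}, with ${\bf B}\Theta{\bf B}^*$ arranged so that $\Theta$ encodes precisely these boundary conditions, as in \cite[Remark~3.7]{BFL}.

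The main obstacle is the boundedness-and-surjectivity step for $\cV_1$ of the power operator. Unlike the $n=1$ case, the form domain $\cV_1$ of ${\bf L}^n$ is not transparently described, and controlling the $m$ boundary functionals on it requires the explicit left-definite domain descriptions that are themselves only conjectural in general (Conjecture~\ref{c-equalityofdomains}), proved so far only for the Jacobi expression (Theorem~\ref{t-leftdefdomains}). A secondary difficulty is the endpoint bookkeeping: one must confirm that composing a limit-circle Sturm--Liouville expression with itself $n$ times contributes exactly $n$ linearly independent principal solutions per singular endpoint modulo the minimal domain, and that these remain non-oscillatory, so that the count $m\in\{n,2n\}$ is correct and the functionals are independent. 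I expect that resolving Conjecture~\ref{c-equalityofdomains}, or at least its $\cA_n=\cB_n$ half, for the class at hand would essentially unlock the boundedness step and hence the whole construction.
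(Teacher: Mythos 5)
The statement you are trying to prove is Conjecture~\ref{c-powers}; the paper offers no proof of it and explicitly presents it as open, so there is no ``paper proof'' to match. Your proposal is a conditional research outline rather than a proof, and it essentially reproduces the heuristic discussion the paper itself gives immediately after the conjecture, including the same unresolved obstruction. The decisive step --- defining the functionals by $\langle\fdot,f_j\rangle_{1,-1}:=[\fdot,u^{(j)}_c](x)\big|_{x=c}$ with principal solutions and then showing this boundary map is bounded and surjective on $\cH_1({\bf L}^n_0)$ with kernel equal to the Friedrichs domain --- is precisely what the authors identify as the open problem. Worse, they state that the naive choice of the $n$ principal solutions does \emph{not} appear to produce a boundary triple as in the base case: in the one worked example (powers of the Jacobi operator, \cite{F}) a modified Gram--Schmidt process had to be applied to the principal solutions first, and no general replacement for that step is known. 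Your plan does not supply that missing ingredient; it simply asserts that the principal solutions ``pair trivially with the Friedrichs domain'' and that the resulting map is a boundary pair, which is the content of the conjecture, not an argument for it.

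A second structural problem is that your outline is doubly conditional on other open statements in the same paper. You invoke Conjecture~\ref{Friedrichs} to identify the kernel of the boundary map with $\dom(({\bf L}^n)_F)$ and the $2n$-th left-definite space, and you concede that the boundedness step requires the domain descriptions of Conjecture~\ref{c-equalityofdomains}, both of which are only verified for the Jacobi expression. A proof that reduces one open conjecture to two others, plus an unproved boundedness/surjectivity claim, is not a proof. To make genuine progress you would need either (i) an intrinsic argument that the order-$2n$ sesquilinear form $[\fdot,u^{(j)}_c]$ extends continuously to the form domain $\cV_1$ of ${\bf L}^n_0$ without knowing $\cV_1$ explicitly, or (ii) a general analogue of the Gram--Schmidt modification from \cite{F} that produces $m$ linearly independent elements of $\cD\ti{max}^n$ modulo $\cD\ti{min}^n$ whose boundary functionals are bounded on $\cH_1({\bf L}^n_0)$ and vanish exactly on the Friedrichs domain. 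Neither appears in your proposal.
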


The parametrization of self-adjoint linear relations again relates to the boundary conditions imposed by an underlying boundary triple here, so obtaining all self-adjoint extensions via the perturbation should not be an overly difficult problem. The choice of the operator ${\bf L}^n_0$ allows for some freedom; it was the result of a boundary triple construction in Theorem \ref{t-twopert} but these are not unique and the only requirement needed here is that the chosen extension is transversal to the Friedrichs domain.

The real issue is finding perturbation vectors $f_1,\dots,f_m$ from $\cH_{-1}({\bf L}_0^n)$ so that ${\bf B}$ and ${\bf B}^*$ are well-defined. In Theorem \ref{t-twopert} these roles were essentially played by principal solutions via equation \eqref{e-deltaexplain}, but an obvious analog does not immediately present itself in this case. For any $\lambda\in\RR$ there exist general solutions to the eigenvalue problem for the uncomposed Sturm--Liouville differential operator near an endpoint, one principal and one non-principal if it is limit-circle. Hence, there are $n$ principal solutions to use in equation \eqref{e-deltaexplain}, but it does not appear that these functions can create a boundary triple like in the base case, see e.g.~\cite[Chapter 6]{BdS}. This problem was solved in \cite{F} for powers of the Jacobi differential operator by applying a modified Gram--Schmidt process to these principal solutions, but a generalization is outstanding. 

A proof of Conjecture \ref{Friedrichs} would also be helpful here, as an analogous statement may be true for the transversal self-adjoint extension ${\bf L}_0$. A possible intermediary goal would then be to state the problem in terms of the uncomposed operator ${\bf L}_0$: the operator ${\bf L}^n_0$ would be the $2n$-th left-definite operator associated with ${\bf L}_0$. Hence, the main problem in Conjecture \ref{c-powers} would be to show that the perturbation vectors in ${\bf B}^*$ are in $\cH_{-n}({\bf L}_0)$. This seemingly small change could have a large impact.

\section*{Appendix: Extension Theory}\label{ss-extensions}

For readers not familiar with classical self-adjoint extension theory for symmetric operators we include some basic definitions and notions as applied to ordinary differential operators. The classical references \cite{AG,N} can also be consulted for further details.

Let $\ell$ be a Sturm--Liouville differential expression. It is important to reiterate that the analysis of self-adjoint extensions does not  at all involve changing the differential expression associated with the operator, merely the domain of definition by applying boundary conditions.

\begin{definition}[{see, e.g.~\cite[Section 17.4]{N}}]\label{d-max}
The {\em maximal domain} of $\ell[\fdot]$ is given by 
\begin{align*}
\cD\ti{max}=\cD\ti{max}(\ell):=\big\{f:(a,b)\to\mathbb{C}~:~f,pf'&\in\text{AC}\ti{loc}(a,b); \\
&f,\ell[f]\in L^2[(a,b),w]\big\}.
\end{align*}
\end{definition}

The designation of ``maximal'' is appropriate in this case because $\cD\ti{max}(\ell)$ is the largest possible subset of $L^2[(a,b),w]$ that $\ell$ maps back into $L^2[(a,b),w]$. For $f,g\in\cD\ti{max}(\ell)$ and $a<a_0\le b_0<b$ the {\em sesquilinear form} associated with $\ell$ by 
\begin{equation}\label{e-greens2}
[f,g]\bigg|_{a_0}^{b_0}:=\int_{a_0}^{b_0}\left\{\ell[f(x)]\overline{g(x)}-\ell[\overline{g(x)}]f(x)\right\}w(x)dx.
\end{equation}
The notation for the sesquilinear form does not involve $\ell$ explicitly, rather the differential expression will be clear from context.

\begin{theorem}[{see, e.g.~\cite[Section 17.4]{N}}]\label{t-limits}
The limits $[f,g](b):=\lim_{x\to b^-}[f,g](x)$ and $[f,g](a):=\lim_{x\to a^+}[f,g](x)$ exist and are finite for $f,g\in\cD\ti{max}(\ell)$.
\end{theorem}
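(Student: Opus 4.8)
The plan is to reduce the existence of the endpoint limits to the convergence of an improper integral, exploiting the integral representation of the sesquilinear form in equation \eqref{e-greens2} together with the membership properties built into the maximal domain. The crucial feature of $\cD\ti{max}(\ell)$ (Definition \ref{d-max}) is that for $f,g\in\cD\ti{max}(\ell)$ all four functions $f,\,g,\,\ell[f],\,\ell[g]$ lie in $L^2[(a,b),w]$. Fixing an interior reference point $c\in(a,b)$ and applying \eqref{e-greens2} on the subinterval with endpoints $c$ and $x$, one obtains
\begin{align*}
[f,g](x)=[f,g](c)+\int_c^x\left\{\ell[f(t)]\overline{g(t)}-\ell[\overline{g(t)}]f(t)\right\}w(t)\,dt,
\end{align*}
so that the existence of the one-sided limits $[f,g](b)$ and $[f,g](a)$ is equivalent to the convergence of this integral as $x\to b^-$ and as $x\to a^+$, respectively.

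First I would establish that the integrand is absolutely integrable over all of $(a,b)$. This is where the $L^2[(a,b),w]$ memberships do all the work: by the Cauchy--Schwarz inequality in $L^2[(a,b),w]$,
\begin{align*}
\int_a^b\big|\ell[f]\,\overline{g}\big|\,w\,dt\le\|\ell[f]\|\,\|g\|<\infty,
\end{align*}
and the symmetric estimate bounds the contribution of $\ell[\overline{g}]f$ by $\|\ell[g]\|\,\|f\|<\infty$. Hence the full integrand belongs to $L^1(a,b)$, and in particular its tails near each endpoint are finite and vanish as the interval of integration shrinks to the endpoint.

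I would then conclude by a standard Cauchy-criterion argument: since the integrand is absolutely integrable near $b$, the increments of $x\mapsto\int_c^x\{\cdots\}\,w\,dt$ are dominated by tail integrals that tend to $0$, so the integral converges and $[f,g](b)$ exists and is finite; the argument at $a$ is identical. I do not expect a serious obstacle, as the statement is classical (cf.\ the cited \cite[Section 17.4]{N}); the only point deserving care is the reduction itself, namely the identification of the pointwise bilinear concomitant $[f,g](x)$ appearing in Green's formula \eqref{e-greens} with the integral representation up to the additive constant $[f,g](c)$. Once that identification is in place, the entire weight of the proof rests on the two $L^2[(a,b),w]$ memberships encoded in $\cD\ti{max}(\ell)$, which is precisely why no hypothesis on the endpoint behavior of $\ell$ is needed.
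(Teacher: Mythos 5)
Your argument is correct and is precisely the classical proof that the paper's citation to \cite[Section 17.4]{N} refers to (the paper itself supplies no proof): integrate the Lagrange identity from a fixed interior point, use the four $L^2[(a,b),w]$ memberships in Definition \ref{d-max} with Cauchy--Schwarz to get absolute integrability of the integrand, and conclude by the Cauchy criterion. The one step you flag as needing care --- identifying the pointwise concomitant with the integral up to the constant $[f,g](c)$ --- is indeed the only nontrivial reduction, and it follows from integration by parts using $f,pf'\in\text{AC}\ti{loc}(a,b)$.
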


The equation \eqref{e-greens2} is {\em Green's formula} for $\ell[\fdot]$, and in the case of Sturm--Liouville operators \eqref{d-sturmop} it can be explicitly computed using integration by parts to be the modified Wronskian
\begin{align*}
[f,g]\bigg|_a^b:=p(x)[f'(x)g(x)-f(x)g'(x)]\bigg|_a^b.
\end{align*}

\begin{definition}[{see, e.g.~\cite[Section 17.4]{N}}]\label{d-min}
The {\em minimal domain} of $\ell[\fdot]$ is given by
\begin{align*}
\cD\ti{min}=\cD\ti{min}(\ell):=\left\{f\in\cD\ti{max}(\ell)~:~[f,g]\big|_a^b=0~~\forall g\in\cD\ti{max}(\ell)\right\}.
\end{align*}
\end{definition}

The maximal and minimal operators associated with the expression $\ell[\fdot]$ are then defined as $L\ti{min}=\{\ell,\cD\ti{min}\}$ and $L\ti{max}=\{\ell,\cD\ti{max}\}$, respectively. By {\cite[Section 17.2]{N}}, these operators are adjoints of one another, i.e.~$(L\ti{min})^*=L\ti{max}$ and $(L\ti{max})^*=L\ti{min}$. The operator $L\ti{min}$ is thus symmetric.

Note that the self-adjoint extensions of a symmetric operator coincide with those of the closure of the symmetric operator {\cite[Theorem XII.4.8]{DS}}, so without loss of generality we assume that $L\ti{min}$ is closed.

\begin{definition}[variation of {\cite[Section 14.2]{N}}]\label{d-defect}
Define the {\em positive defect space} and the {\em negative defect space}, respectively, by
$$\cD_+:=\left\{f\in\cD\ti{max}~:~L\ti{max}f=if\right\}
\quad\text{and}\quad
\cD_-:=\left\{f\in\cD\ti{max}~:~L\ti{max}f=-if\right\}.$$
\end{definition}

The dimensions dim$(\cD_+)=m_+$ and dim$(\cD_-)=m_-$, called the {\em positive} and {\em negative deficiency indices of $L\ti{min}$} respectively, will play an important role. They are usually conveyed as the pair $(m_+,m_-)$. The symmetric operator $L\ti{min}$ has self-adjoint extensions if and only if its deficiency indices are equal {\cite[Section 14.8.8]{N}}.

The classical {\em von Neumann formula} then says that 
\begin{align*}
\cD\ti{max}=\cD\ti{min}\dotplus\cD_+\dotplus\cD_-.
\end{align*}
The decomposition can be made into an orthogonal direct sum by using the graph norm, see \cite{FFL}.
If the operator $L\ti{min}$ has any self-adjoint extensions, then the deficiency indices of $L\ti{min}$ have the form $(m,m)$, where $0\leq m\leq 2$ {\cite[Section 14.8.8]{N}}.
Hence, Sturm--Liouville expressions that generate self-adjoint operators must have deficiency indices $(0,0)$, $(1,1)$ or $(2,2)$. If a differential expression is either in the limit-circle case or regular at the endpoint $a$, it requires a boundary condition at $a$. If it is in the limit-point case at the endpoint $a$, it does not require a boundary condition. The analogous statements are true at the endpoint $b$.


\end{document}